\documentclass[onepage,12pt]{article}

\usepackage[utf8]{inputenc}
\usepackage{amsmath}
\usepackage{amsthm}
\usepackage{amssymb}
\usepackage{times}
\usepackage{bm}
\usepackage[authoryear]{natbib}
\usepackage{amsfonts}
\usepackage{amssymb}
\usepackage{authblk}
\usepackage{mathtools}
\usepackage[margin=0.75in]{geometry}
\usepackage{xcolor}
\usepackage{array}
\usepackage[linktoc=page,linkcolor=blue,colorlinks=true,citecolor=blue,breaklinks=True]{hyperref}
\usepackage{booktabs}
\usepackage{caption}
\usepackage{tikz}
\usepackage{lmodern}
\usepackage{tocloft}
\setcounter{tocdepth}{1}
\usepackage{xpatch}
\usepackage{apalike}

\usetikzlibrary{positioning}

\usepackage{graphicx}
\captionsetup{font=normalsize,labelfont=normalsize}

\DeclarePairedDelimiterX{\norm}[1]{\lVert}{\rVert}{#1}
\DeclarePairedDelimiterX{\abs}[1]{\lvert}{\rvert}{#1}

\DeclareMathOperator*{\argmax}{arg\,max}
\DeclareMathOperator*{\argmin}{arg\,min}
\newcommand\restr[2]{{
  \left.\kern-\nulldelimiterspace 
  #1 
  \right|_{#2} 
  }}

\DeclarePairedDelimiterX{\infdivx}[2]{(}{)}{%
	#1\;\delimsize\|\;#2%
}

\newcommand{\KL}{\text{KL}\infdivx}
\DeclareMathOperator*{\esssup}{ess\,sup}
\newcommand{\diff}{\mathrm{d}}
\newcommand{\bbP}{\mathbb{P}}

\newcommand{\bbR}{\mathbb{R}}

\newcommand{\bbE}{\mathbb{E}}
\newcommand{\bbN}{\mathbb{N}}

\newcommand{\calA}{\mathcal{A}}
\newcommand{\calH}{\mathcal{H}}

\newcommand{\calB}{\mathcal{B}}
\newcommand{\calX}{\mathcal{X}}

\newcommand{\calP}{\mathcal{P}}
\newcommand{\calL}{\mathcal{L}}

\newtheorem{problem}{Problem}
\newtheorem{theorem}{Theorem}
\newtheorem{lemma}{Lemma}

\newtheorem{corollary}{Corollary}

\newtheorem{example}{Example}

\title{Bayes Hilbert Spaces for Posterior Approximation}
\author{George Wynne}
\affil{University of Bristol}

\date{}

\begin{document}

\makeatletter

\let\Title\@title
\patchcmd{\NAT@citex}
{\@citea\NAT@hyper@{%
		\NAT@nmfmt{\NAT@nm}%
		\hyper@natlinkbreak{\NAT@aysep\NAT@spacechar}{\@citeb\@extra@b@citeb}%
		\NAT@date}}
{\@citea\NAT@nmfmt{\NAT@nm}%
	\NAT@aysep\NAT@spacechar\NAT@hyper@{\NAT@date}}{}{}

\patchcmd{\NAT@citex}
{\@citea\NAT@hyper@{%
		\NAT@nmfmt{\NAT@nm}%
		\hyper@natlinkbreak{\NAT@spacechar\NAT@@open\if*#1*\else#1\NAT@spacechar\fi}%
		{\@citeb\@extra@b@citeb}%
		\NAT@date}}
{\@citea\NAT@nmfmt{\NAT@nm}%
	\NAT@spacechar\NAT@@open\if*#1*\else#1\NAT@spacechar\fi\NAT@hyper@{\NAT@date}}
{}{}
\makeatother
\maketitle
\begin{abstract}
	\noindent {Performing inference in Bayesian models requires sampling algorithms to draw samples from the posterior. This becomes prohibitively expensive as the size of data sets increase. Constructing approximations to the posterior which are cheap to evaluate is a popular approach to circumvent this issue. This begs the question of what is an appropriate space to perform approximation of Bayesian posterior measures. This manuscript studies the application of Bayes Hilbert spaces to the posterior approximation problem. Bayes Hilbert spaces are studied in functional data analysis in the context where observed functions are probability density functions and their application to computational Bayesian problems is in its infancy. This manuscript shall outline Bayes Hilbert spaces and their connection to Bayesian computation, in particular novel connections between Bayes Hilbert spaces, Bayesian coreset algorithms and kernel-based distances. }
\end{abstract}

\tableofcontents
\newpage


\section{Introduction}
The aim of this manuscript is to advocate for, and demonstrate the utility of, Bayes Hilbert spaces as tools to investigate computational Bayesian problems. 
Bayesian statistics is a common statistical modelling paradigm which involves the fusion of prior knowledge about the unknown quantity to be inferred with observed data. The result is a posterior measure over the unknown quantity. The posterior measure is the object of interest since it can be used to perform prediction and uncertainty quantification. Practically though, using the posterior measure for these tasks involves deploying sampling algorithms on the posterior measure. The practical issue is that it is expensive to sample from the posterior measure when the observed data set is large. The typical cost when one has observed $N$ data points is $O(N)$ per iteration of the sampling algorithm, which becomes prohibitive since many thousands of iterations could be required to obtain the desired number of samples. For more on Bayesian approaches to modelling see \citet{Gelman2013} and for sampling in Bayesian methods see the handbook \citet{Kroese2011}. 

There are a range of solutions to this problem. The two main approaches are designing algorithms which avoid the $O(N)$ cost each iteration and forming an approximation to the target posterior and sampling from that approximation instead of the target posterior. Examples of the former approach involve sampling algorithms which employ sub-sampling \citep{Bierkins2019,Dang2019}, variational inference \citep{Blei2017,Hoffman2013} and divide and conquer methods \citep{Scott2016,Srivastava2015,Vyner2023}. The latter approximation approach will be the focus of this paper. This approach typically involves keeping the prior the same and approximating the likelihood function with something cheaper to evaluate than $O(N)$, resulting in a cheaper per-iteration cost when sampling. The area of this methodology which shall be the focus in the sequel is Bayesian coresets \citep{Huggins2016,Campbell2019}. This is where the an approximation of the likelihood is formed based on $M$ points and weights, with $M\ll N$. This approximation results in an $O(M)$ per-iteration cost once inserted into a sampling algorithm. Many variants of this method exist \citep{Campbell2018,Campbell2019,Campbell2019VI,Manousakas2020,Naik2022}. 

When performing this approximation, indeed when performing any posterior approximation, one must ask themselves what is an appropriate space to form the approximation? This choice plays a critical role in the analysis and performance of posterior approximations. It will impact both the notion of distance between the approximate posterior and the target posterior and how one performs optimisation with respect to this distance to form the approximation. The aim of this paper is to show Bayes Hilbert spaces are appropriate spaces to perform such analysis and facilitate a novel perspectives on Bayesian coreset algorithms and posterior approximation in general. 

Bayes Hilbert spaces \citep{Boogaart2010,Boogaart2014,Barfoot2023} are spaces of measures which have notions of addition and scalar multiplication which are coherent with Bayes's theorem. This makes them particularly appealing spaces to form posterior approximations. The notion of a Bayes Hilbert space has origins in compositional data analysis \citep{Aitchson1982,Glahn2011} and is now a common tool used in distributional data analysis \citep{Mateu2021,Petersen2022}. This is a sub-field of functional data analysis where a user observes a data set which is a collection of probability density functions and wishes to perform analysis. To deal with the specific structure of probability density functions, different versions of addition and scalar multiplication are used in the Bayes Hilbert space than typical function spaces. The use of Bayes Hilbert spaces in distributional data analysis is mature with many applications \citep{Mateu2021}. On the other hand, the use of Bayes Hilbert spaces in computational Bayesian problems is in its infancy. The primary reference of this application is the innovative contribution of \citet{Barfoot2023} which made clear how variational inference procedures can be viewed in terms of projections in Bayes Hilbert spaces. This connection was made by showing variational inference methods can be viewed as optimising coefficients of an approximating function in a Bayes Hilbert space. 

The main technical contributions of this manuscript are relating the Bayes Hilbert space norm to commonly used distances on measures, providing a novel connection between Bayes Hilbert spaces and kernel methods and framing multiple Bayesian coreset algorithms in terms of Bayes Hilbert spaces.

Section \ref{sec:Background} will provide background content for this manuscript. In particular, Section \ref{sec:BHS} will introduce the mathematical details of Bayes Hilbert spaces, Section \ref{sec:Bayes_theorem} will describe the sense in which Bayes Hilbert spaces are coherent with Bayes' theorem and Section \ref{sec:existing} will outline existing literature related to Bayes Hilbert spaces. After the background content, Section \ref{sec:bounds} will provide a novel result which relates the distance in a Bayes Hilbert space to common discrepancies for measures, Section \ref{sec:Bayes_Coresets} will outline how Bayesian coresets can be expressed in a Bayes Hilbert space, Section \ref{sec:MMD} will provide a novel link between maximum mean discrepancy and Bayesian coresets, Section \ref{sec:Hilbert_Coresets} will leverage this link to provide novel connections between maximum mean discrepancy algorithms and Hilbert coreset algorithms, Section \ref{sec:KL_coresets} will describe a novel connection between Bayesian coresets constructed using the Kullback-Leibler divergence and Bayes Hilbert spaces. Concluding remarks and future avenues of research are outlined in Section \ref{sec:Conclusion}.

\section{Background}\label{sec:Background}
This section shall provide background information to motivate and frame the novel results that will occur in the later sections. Section \ref{sec:BHS} shall outline Bayes Hilbert spaces which are the central focus of this manuscript, Section \ref{sec:Bayes_theorem} shall outline Bayes' theorem and Section \ref{sec:existing} shall discuss lines of research related to the Bayes Hilbert spaces methodology, with a focus on methods which map measures into Hilbert spaces. 


\subsection{Bayes Hilbert Spaces}\label{sec:BHS}
The aim of this section is to motivate and introduce the mathematical details of Bayes Hilbert spaces. This will include defining the elements of a Bayes Hilbert space, the notions of addition and scalar multiplication and the Hilbert inner product. 

Before beginning with the mathematical details of a Bayes Hilbert space it is helpful to understand how the ideas developed. The ideas behind a Bayes Hilbert space may be traced back to compositional data analysis \citep{Aitchson1982}. Mathematically, compositional data is non-negative multivariate data with a sum constraint that represents relative, rather than absolute, information. This data type is appropriate when one wishes to analyse the composition of certain objects, hence the name compositional. For example, one may be studying a collection of chemicals which are each made up from a fixed number of other chemicals. Then compositional data analysis can be used to perform analysis on the relative proportions of each of the chemicals. For a long list of other examples see \citet{Aitchison1986}.

This type of data has many quirks. For example, a coherent notion of addition and subtraction is not straightforward. This is because the typical notion of vector addition would lead to a notion of subtraction which could result in negative values, and negative values cannot represent proportions. Issues therefore also occur if one were to try and use the typical notion of scalar multiplication with this type of data. 

The pioneering paper by \citet{Aitchson1982} derived a mathematical framework to deal with the quirks of compositional data outlined above. These tools are known as the Aitchison geometry and form the bedrock for the field of compositional data analysis. The geometry revolves around deriving particular notions of equality, addition, scalar multiplication and subtraction for compositional data along with a logarithmic transform which maps the data from the simplex into a nicer space. The application areas of this mathematical framework are legion, covering areas such as geology, ecology and microbiome research \citep{Glahn2011,Filzmoser2018,Aitchison1986,vandenBoogaart2013} as well as an insightful survey paper marking 40 years since Aitchison's original publication \citep{Greenacre2022}.

The idea of a Bayes Hilbert space is to generalise the Aitchison geometry used in compositional data analysis to the situation where the objects of interest are measures. The easiest way to conceptualise this move is to view compositional data as a histogram, with each the $n$-th entry of a compositional vector representing the probability of the $n$-th event. Then, the trick is to view a probability density function as a continuous limit of a histogram and to then adapt the Aitchison geometry to this continuous limit, which in practice means moving from finite sums over the entries of a compositional vector to integrals with respect to probability density functions. Then the move from probability density functions to measures is made by using Radon-Nikodym derivatives. 

The rest of this section will be spent making these ideas and intuition concrete. All technical content is taken from existing sources \citep{Hron2022,Boogaart2014,Boogaart2010,Maier2021}.

Let $(\Theta,\calA)$ be a measurable space and $\mu$ a probability measure on $\Theta$. This measure $\mu$ will act as the base measure. What follows can be generalised to finite measures that are not probability measures, the difference is simply additional normalisation terms. Elements of $\Theta$ will be denoted by $\theta$. Define $M(\mu)$ as the set of measures on $(\Theta,\calA)$ that are $\sigma$-finite and mutually absolutely continuous with respect to $\mu$, which means that they have the same null sets as $\mu$.  Discussion regarding the choice of $\mu$ for Bayesian computation problems is given in Section \ref{sec:Bayes_Coresets}. 

Define an equivalence relation $=_{B}$ between two measure $\eta,\nu\in M(\mu)$ as $\eta =_{B}\nu$ if and only if there exists a constant $c > 0$ such that $\eta(A)=c\nu(A)\:\forall \:A\in\calA$. It can be easier to understand this notion of equivalence in terms of the Radon-Nikodym derivatives with respect to $\mu$. For a measure $\eta\in M(\mu)$ the Radon-Nikodym derivative of $\eta$ with respect to $\mu$ is the non-negative function $\frac{\diff\eta}{\diff\mu}$ such that $\eta(A) = \int_{A}\frac{\diff\eta}{\diff\mu}\diff\mu\:\forall A\in\calA$. The existence of such a function is guaranteed by the Radon-Nikodym theorem, see for example \citet[Theorem 3.2.2]{Bogachev2007}. The function is unique $\mu$-almost everywhere. The interpretation of this result is simply that measures in $M(\mu)$ possess what can be viewed as densities with respect to $\mu$ which makes interpretation of results to come more straightforward. To this end, for $\eta\in M(\mu)$ set $p_{\eta,\mu} \coloneqq \frac{\diff\eta}{\diff\mu}$ as the Radon-Nikodym derivative of $\eta$ with respect to $\mu$ for notational convenience. Note that $p_{\mu,\mu} = 1$ in the sense that $p_{\mu,\mu}$ is the function constantly equal to one. Then $\eta=_{B}\nu$ if and only if there exists a constant $c>0$ such that $p_{\eta,\mu} = cp_{\nu,\mu}$ $\mu$-almost everywhere. Indeed, anytime that these Radon-Nikodym derivatives are written as equal it is to be interpreted in the $\mu$-almost everywhere sense. 

Define $B(\mu)$ to be the set of equivalence classes with respect to $=_{B}$ within $M(\mu)$. With this notion of equality in place, which is helpful when studying Bayesian problems as described in Section \ref{sec:Bayes_Coresets}, the next step is to define addition and scalar multiplication. For $\eta,\nu\in B(\mu)$ addition is defined
\begin{align*}
	(\eta\oplus\nu)(A) \coloneqq \int_{A}p_{\eta,\mu}(\theta)\cdot p_{\nu,\mu}(\theta)\diff\mu(\theta)
\end{align*}
for every $A\in\calA$, For $\eta,\nu\in B(\mu)$ and $\alpha\in\bbR$ scalar multiplication is defined as
\begin{align*}
	(\alpha\odot\eta)(A) \coloneqq \int_{A}p_{\eta,\mu}(\theta)^{\alpha}d\mu(\theta)
\end{align*}
for every $A\in\calA$. The combination of addition and scalar multiplication facilitates the identification of an additive inverse, namely for $\nu\in B(\mu)$ define $\ominus \nu\coloneqq (-1\odot\nu)$ and for $\eta,\nu \in B(\mu)$ define $\eta\ominus\nu \coloneqq \eta\oplus(\ominus \nu)$. 

While these definitions may look abnormal at first, all of the operations $\oplus,\odot,\ominus$ can be interpreted straightforwardly through the Radon-Nikodym derivatives. Namely, for any $\eta,\nu\in B(\mu)$ the operation $\oplus$ can be written as $p_{\eta,\mu}\oplus p_{\nu,\mu}\coloneqq p_{\eta,\mu}\cdot p_{\nu,\mu}$ where $\cdot$ means standard multiplication so that $p_{\eta\oplus\nu,\mu} = p_{\eta,\mu}\oplus p_{\nu,\mu}$. Scalar multiplication $\odot$ can be written as $\alpha \odot p_{\eta,\mu}\coloneqq p_{\eta,\mu}^{\alpha}$ so that $p_{\alpha\odot\eta,\mu} =\alpha \odot p_{\eta,\mu}$. Finally, $\ominus$ can be written as $p_{\eta\ominus\nu,\mu} = p_{\eta,\mu}/p_{\nu,\mu}$ so that $p_{\eta\ominus\nu,\mu} = p_{\eta,\mu} - p_{\eta,\mu}$. These operations defined directly on the Radon-Nikodym derivatives make the natural identification $\eta$ with $ p_{\eta,\mu}$ for all $\eta\in B(\mu)$ coherent in the sense that if one writes $\eta =_{B} p_{\eta,\mu}\:\forall\eta\in B(\mu)$ then $\eta\oplus\nu =_{B} p_{\eta\oplus\nu,\mu}$ for all $\eta,\nu\in B(\mu$), similarly for $\odot,\ominus$. Therefore, for ease of notation $\eta$ may be replaced with $p_{\eta,\mu}$ at certain places in the sequel as at times the focus shall be on the Radon-Nikodym derivatives rather than the measures themselves.

The next result links all these operations together and assures us that these operations result in a valid real vector space.

\begin{theorem}\citep[Theorem 5]{Boogaart2010}\label{prop:vector_space}
	$B(\mu)$ equipped with $\oplus,\odot$ is a real vector space with $\mu$ the additive zero element. 
\end{theorem}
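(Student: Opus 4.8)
The plan is to linearise the two operations via the logarithm and thereby transport the vector space structure from an ordinary function space. Recall from the identifications recorded before the statement that $p_{\eta\oplus\nu,\mu} = p_{\eta,\mu}\cdot p_{\nu,\mu}$ and $p_{\alpha\odot\eta,\mu} = p_{\eta,\mu}^{\alpha}$, so that $\oplus$ is pointwise multiplication and $\odot$ is pointwise exponentiation of densities. Since $\log(ab) = \log a + \log b$ and $\log(a^{\alpha}) = \alpha\log a$, the map $L\colon B(\mu)\to\calF$ defined by $L(\eta) \coloneqq \log p_{\eta,\mu}$ should convert $(\oplus,\odot)$ into ordinary $(+,\cdot)$, where $\calF$ denotes the real vector space of $\mu$-almost everywhere equivalence classes of measurable functions that are finite $\mu$-almost everywhere, taken modulo additive constants. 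The goal is to show $L$ is a bijection intertwining the operations, so the manifest vector space structure of $\calF$ transports to $B(\mu)$.

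First I would confirm the operations are well posed. Because every $\eta\in M(\mu)$ is mutually absolutely continuous with $\mu$, its density satisfies $0 < p_{\eta,\mu} < \infty$ $\mu$-almost everywhere, so $\log p_{\eta,\mu}$ is finite almost everywhere and $L$ indeed lands in $\calF$. Closure requires that $p_{\eta,\mu}\cdot p_{\nu,\mu}$ and $p_{\eta,\mu}^{\alpha}$ again induce elements of $M(\mu)$: almost-everywhere positivity yields mutual absolute continuity, while $\sigma$-finiteness follows by truncating $\Theta = \bigcup_{n}\{p \le n\}$ and using finiteness of $\mu$, since for instance $(\eta\oplus\nu)(\{p_{\eta,\mu}\cdot p_{\nu,\mu}\le n\}) \le n$. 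I would also check that $\oplus$ and $\odot$ descend to $=_{B}$ classes: if $p_{\eta',\mu} = c_1 p_{\eta,\mu}$ and $p_{\nu',\mu} = c_2 p_{\nu,\mu}$ with $c_1,c_2 > 0$, then $p_{\eta'\oplus\nu',\mu} = (c_1 c_2)\, p_{\eta\oplus\nu,\mu}$ and $p_{\alpha\odot\eta',\mu} = c_1^{\alpha}\, p_{\alpha\odot\eta,\mu}$, so the results are unchanged in $B(\mu)$; correspondingly $L$ is well defined because a positive multiplicative constant becomes an additive constant, which is quotiented out in $\calF$.

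With well-posedness established, the axioms follow by transport. The intertwining relations $L(\eta\oplus\nu) = L(\eta) + L(\nu)$ and $L(\alpha\odot\eta) = \alpha L(\eta)$ are immediate from the logarithm identities, so commutativity and associativity of $\oplus$, both distributive laws, and the compatibilities $\alpha\odot(\beta\odot\eta) = (\alpha\beta)\odot\eta$ and $1\odot\eta =_{B}\eta$ all descend from the corresponding identities in $\calF$. Surjectivity of $L$ holds because any $f\in\calF$ makes $e^{f}$ positive and finite $\mu$-almost everywhere, hence the density of some $\eta\in M(\mu)$ by the same $\sigma$-finiteness argument; injectivity is the statement that $L(\eta) = L(\nu)$ means the densities agree up to a positive constant, that is $\eta =_{B}\nu$. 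Finally the zero of $\calF$ is the class of constant functions, whose preimage under $L$ is the class of measures with constant density; as $p_{\mu,\mu} = 1$ this class is precisely $\mu$, confirming $\mu$ is the additive zero, and $\ominus\nu = -1\odot\nu$ supplies the inverse since $p_{\nu\ominus\nu,\mu} = p_{\nu,\mu}/p_{\nu,\mu} = 1 = p_{\mu,\mu}$.

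I expect the only real obstacle to be the measure-theoretic bookkeeping rather than the algebra: one must ensure every density in play is positive and finite $\mu$-almost everywhere so that $\log$, the products and the powers are all defined, and that the resulting measures remain $\sigma$-finite and mutually absolutely continuous so they genuinely lie in $M(\mu)$. Finiteness of the base measure $\mu$ is exactly what renders the $\sigma$-finiteness step clean, and once closure is secured every remaining claim is a routine transport of structure along the isomorphism $L$.
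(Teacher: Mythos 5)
Your proof is correct, but it is worth noting that the paper itself offers no proof of this statement: it is imported wholesale from \citet[Theorem 5]{Boogaart2010}, where the argument proceeds by verifying the vector-space axioms directly on density representatives. Your route is genuinely different in organisation: you transport the structure along the single map $L(\eta) = \log p_{\eta,\mu}$ into the quotient space $\calF$ of ($\mu$-a.e.\ classes of) a.e.-finite measurable functions modulo additive constants, so that once $L$ is shown to be a well-defined bijection intertwining $(\oplus,\odot)$ with $(+,\cdot)$, all eight axioms descend at once rather than being checked one by one. You also correctly isolate where the real work lies, and handle it: positivity and finiteness of $p_{\eta,\mu}$ $\mu$-a.e.\ (from mutual absolute continuity and $\sigma$-finiteness of $\eta$ against the finite base measure), closure of $M(\mu)$ under products and powers of densities via the truncation $\Theta = \bigcup_{n}\{p\le n\}$, and invariance of both operations and of $L$ under the $=_{B}$ quotient, with multiplicative constants becoming additive constants killed in $\calF$. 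What your approach buys is that it is essentially the unnormalised precursor of the CLR isometry the paper records as Theorem \ref{thm:BHS_L2}: replacing your quotient by constants with the explicit centring $\log p_{\eta,\mu} - \bbE_{\mu}[\log p_{\eta,\mu}]$, and imposing the $B^{2}(\mu)$ integrability condition, turns $\calF$ into $L^{2}_{0}(\mu)$, so your argument makes the linear-algebraic content of the later Hilbert-space result visible already at the Bayes linear space stage. The direct verification in the source avoids introducing the auxiliary quotient space but repeats the logarithm identities axiom by axiom; the two arguments carry the same measure-theoretic burden, which you discharge adequately.
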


This vector space is known as a \textit{Bayes linear space} \citep{Boogaart2010} and provides the basic structure needed to proceed to Bayes Hilbert spaces. The next step is defining the following set
\begin{align}
	B^{2}(\mu) = \left\{\eta\in B(\mu)\colon \bbE_{\mu}\left[\left\lvert\log \frac{\diff\eta}{\diff\mu}\right\rvert^{2}\right] < \infty\right\}.\label{eq:BHS_set}
\end{align}
Note that $\eta\in B^{2}(\mu)$ if and only if $\log p_{\eta,\mu}\in L^{2}(\mu)$, where $L^{2}(\mu)$ is the typical space of (equivalence classes of) functions on $\Theta$ that are square integrable with respect to $\mu$. 

As previously mentioned, the central tool for compositional data analysis was a logarithm transform to map the compositional data into a nice space. The analogy to this for the present case is the \textit{centred log-ratio} (CLR) transform which is defined \citep{Egozcue2006,Boogaart2014}
\begin{align*}
	\Psi_{\mu}(\eta) = \log p_{\eta,\mu} - \bbE_{\mu}[\log p_{\eta,\mu}],
\end{align*}
where the expectation is being taken with respect to the input argument of $\log p_{\eta,\mu}$. A few remarks are in order. First, the expectation is finite given the assumption that $\mu$ is a finite measure and that $\eta\in B^{2}(\mu)$. Second, since $p_{\eta,\mu}$ accepts as argument an element of $\Theta$ so to does the CLR $\Psi_{\mu}(\eta)$. Third, $\Psi_{\mu}(\mu) = 0$, in the sense that it is the function constantly zero, since $p_{\mu,\mu} = 1$ meaning that the base measure, which is the additive identity in $B^{2}(\mu)$ is mapped to the zero function.  The CLR maps into $L^{2}_{0}(\mu) = \{f\in L^{2}(\mu)\colon \bbE_{\mu}[f] = 0\}$, the elements of $L^{2}(\mu)$ that have zero mean, which is the analogy to the nice space that is used in compositional data analysis. This space is also used in information geometry as a tangent space to manifolds of measures, see Section \ref{sec:existing} for more discussion. Finally, the integrability condition in $B^{2}(\mu)$ is very weak and means that $B^{2}(\mu)$ can contain infinite measures. Discussion on this is given in Section \ref{sec:Conclusion}.

The operations $\oplus,\odot$ relate addition with multiplication and scalar multiplication with exponentiation, these relations are maintained by logarithms which is why the CLR transform is helpful. Also, the subtraction of the expectation of the logarithm ensures the scale invariance property in the definition of $=_{B}$ holds. Overall, this means for any $\eta,\nu\in B^{2}(\mu)$ and $\alpha\in\bbR$ the following linearity holds $\Psi_{\mu}(\alpha\odot(\eta\oplus\nu)) = \alpha\cdot\left(\Psi_{\mu}(\eta)+\Psi_{\mu}(\nu)\right)$ and that for any $\eta,\nu\in B^{2}(\mu)$ such that $\eta =_{B}\nu$ the CLR maps $\eta,\nu$ to the same output, meaning $\Psi_{\mu}(\eta) = \Psi_{\mu}(\nu)$. The inverse function of the CLR map is simply $\exp$, to see this take any $\eta\in B^{2}(\mu)$ then 
\begin{align*}
	\exp(\Psi_{\mu}(\eta)) =_{B} p_{\eta,\mu}\cdot\exp(-\bbE_{\mu}[\log p_{\eta,\mu}]) =_{B}p_{\eta,\mu}
\end{align*}
due to the scale invariance of $=_{B}$ \citep{Boogaart2010}, where the natural identification between $\eta$ and $p_{\eta,\mu}$, as described immediately before Theorem \ref{prop:vector_space}, has been used. 

With the CLR established an inner product structure can be defined. For $\eta,\nu\in B^{2}(\mu)$ define
\begin{align}
	\langle \eta,\nu\rangle_{B^{2}(\mu)} \coloneqq\langle \Psi_{\mu}(\eta),\Psi_{\mu}(\nu)\rangle_{L^{2}(\mu)} = \int_{\Theta}\Psi_{\mu}(\eta)(\theta)\Psi_{\mu}(\nu)(\theta)\diff\mu(\theta).\label{eq:inner_prod}
\end{align}
with corresponding norm
\begin{align}
	\norm{\eta - \nu}_{B^{2}(\mu)} =\norm{\Psi_{\mu}(\eta) - \Psi_{\mu}(\nu)}_{L^{2}(\mu)}.\label{eq:BHS_norm}
\end{align}
The next result shows the inner product on $B^{2}(\mu)$ provides the desired Hilbertian structure. 

\begin{theorem}\citep[Theorem 1]{Boogaart2014}\label{thm:BHS_L2}
	The map $\Psi_{\mu}$ is an isometry between $B^{2}(\mu)$ and $L^{2}_{0}(\mu) = \{f\in L^{2}(\mu)\colon \bbE_{\mu}[f] = 0\}$. The inverse of $\Psi_{\mu}$ is $\exp$ and $B^{2}(\mu)$ is a Hilbert space. 
\end{theorem}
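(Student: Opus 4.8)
The plan is to exploit the fact that the inner product on $B^{2}(\mu)$ is \emph{defined} as the pullback of the $L^{2}(\mu)$ inner product through $\Psi_{\mu}$ in \eqref{eq:inner_prod}, so that the isometry property holds essentially by construction; the genuine content is therefore to verify that $\Psi_{\mu}$ is a well-defined linear bijection onto $L^{2}_{0}(\mu)$ with inverse $\exp$, and then to transport the completeness of $L^{2}_{0}(\mu)$ back through this bijection. Linearity and well-definedness on equivalence classes have already been recorded in the discussion preceding the statement, so I would begin by confirming the target space: for $\eta\in B^{2}(\mu)$ one has $\log p_{\eta,\mu}\in L^{2}(\mu)$ by definition of $B^{2}(\mu)$, and since $\mu$ is a probability measure the constant $\bbE_{\mu}[\log p_{\eta,\mu}]$ lies in $L^{2}(\mu)$ as well, so $\Psi_{\mu}(\eta)\in L^{2}(\mu)$; subtracting the mean forces $\bbE_{\mu}[\Psi_{\mu}(\eta)]=0$, placing the image in $L^{2}_{0}(\mu)$.

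Next I would establish injectivity, which also secures positive-definiteness of the claimed inner product. If $\Psi_{\mu}(\eta)=\Psi_{\mu}(\nu)$ then $\log p_{\eta,\mu}-\log p_{\nu,\mu}$ equals the constant $\bbE_{\mu}[\log p_{\eta,\mu}]-\bbE_{\mu}[\log p_{\nu,\mu}]$ $\mu$-almost everywhere, whence $p_{\eta,\mu}=c\,p_{\nu,\mu}$ for a strictly positive constant $c$, i.e.\ $\eta=_{B}\nu$. For surjectivity---the step I expect to carry the real weight---I would take an arbitrary $f\in L^{2}_{0}(\mu)$ and propose the measure $\eta$ with Radon--Nikodym derivative $p_{\eta,\mu}=\exp(f)$. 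Because $\exp(f)>0$ everywhere, $\eta$ has the same null sets as $\mu$ and is therefore mutually absolutely continuous with $\mu$; the key subtlety is to check $\sigma$-finiteness, which I would obtain from the exhaustion $\Theta=\bigcup_{n\in\bbN}\{f\le n\}$ together with the bound $\int_{\{f\le n\}}\exp(f)\,\diff\mu\le e^{n}\mu(\Theta)<\infty$, valid since $\mu$ is finite. Then $\log p_{\eta,\mu}=f\in L^{2}(\mu)$ gives $\eta\in B^{2}(\mu)$, and $\Psi_{\mu}(\eta)=f-\bbE_{\mu}[f]=f$ because $f$ has zero mean; this simultaneously identifies $\exp$ as the inverse map, matching the computation already given for $\exp(\Psi_{\mu}(\eta))=_{B}p_{\eta,\mu}$.

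Having shown $\Psi_{\mu}$ is a linear isometric bijection onto $L^{2}_{0}(\mu)$, the final step is completeness. I would first argue that $L^{2}_{0}(\mu)$ is a Hilbert space by noting it is the kernel of the functional $f\mapsto\bbE_{\mu}[f]=\langle f,1\rangle_{L^{2}(\mu)}$, which is continuous on $L^{2}(\mu)$ because $1\in L^{2}(\mu)$ (again using finiteness of $\mu$); hence $L^{2}_{0}(\mu)$ is a closed subspace of the complete space $L^{2}(\mu)$ and is itself complete. Completeness of $B^{2}(\mu)$ then follows formally: given a Cauchy sequence $(\eta_{n})$ in $B^{2}(\mu)$, the isometry makes $(\Psi_{\mu}(\eta_{n}))$ Cauchy in $L^{2}_{0}(\mu)$, hence convergent to some $f$, and $\Psi_{\mu}^{-1}(f)$ is the required limit of $(\eta_{n})$. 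The only place demanding genuine care is the surjectivity construction, specifically verifying that $\exp(f)$ defines an admissible element of $M(\mu)$---the $\sigma$-finiteness check above---since every other step is either definitional or a routine transport of structure across the isometry.
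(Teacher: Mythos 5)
Your proof is correct: the paper itself gives no proof of this statement, importing it by citation from \citet[Theorem 1]{Boogaart2014}, and your argument follows the standard route of that reference---well-definedness and linearity of $\Psi_{\mu}$ (recorded in the paper's preceding discussion), injectivity via the scale invariance built into $=_{B}$, surjectivity onto $L^{2}_{0}(\mu)$ by exhibiting the preimage with $p_{\eta,\mu}=\exp(f)$, the isometry holding by construction from \eqref{eq:inner_prod}, and completeness transported from the closed subspace $L^{2}_{0}(\mu)\subset L^{2}(\mu)$. You also correctly isolate the one genuinely delicate point, namely $\sigma$-finiteness of the candidate measure $\eta(A)=\int_{A}\exp(f)\,\diff\mu$ (which may have infinite total mass, consistent with the paper's remark that $B^{2}(\mu)$ contains infinite measures), and your exhaustion by the sets $\{f\leq n\}$ with the bound $e^{n}\mu(\Theta)<\infty$ settles it.
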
 
The space $B^{2}(\mu)$ equipped with the inner product \eqref{eq:inner_prod} is called a  \textit{Bayes Hilbert space}. Theorem \ref{thm:BHS_L2} reveals critical structure of $B^{2}(\mu)$. This Hilbertian structure will facilitate approximations based on dictionaries of functions and optimisation methods based on orthogonal projections. Such structure is typically not present in common representations of measures and therefore Bayes Hilbert spaces have great potential for applications in measure approximation. For example, depending on the choice of $\mu$ it can be straightforward to derive an orthonormal basis for $B^{2}(\mu)$ by using orthogonal polynomials. The case of $\mu$ being a Gaussian and using Hermite polynomials is studied in \citet{Barfoot2023} in the context of Gaussian variational inference. More discussion regarding the potential use of these basis approximations for computational Bayesian problems is given in Section \ref{sec:Conclusion}.


\subsection{Bayes' Theorem and Bayes Hilbert Spaces}\label{sec:Bayes_theorem}
The aim of this section is to explain how Bayes Hilbert spaces have structure which is coherent with Bayes' theorem, justifying their name. In particular, it will be shown that the notions of addition and scalar multiplication used in $B^{2}(\mu)$ are coherent with using Baye's theorem to update a prior belief using likelihoods to obtain a posterior. This section contains no novel technical content with exposition taken from \citet{Boogaart2010,Boogaart2014}.

One may go back to compositional data analysis, the genesis of Bayes Hilbert spaces as outlined in Section \ref{sec:BHS}, to see how the connection to Bayes's theorem occurs. Aitchison noted that the simplex, the canonical sample space for compositional data analysis, is ``familiar in other areas of statistics $\ldots$ as the operation of Bayes’s formula to change a
prior probability assessment into a posterior probability assessment through the perturbing influence of the likelihood function'' \citep{Aitchison1986,Boogaart2010}.

These allusions will now be made concrete. The following recites content from \citet{Boogaart2010} and for more on Bayes' theorem consult \citet{Stuart2010}. The sample space will be $\Theta$ and the prior measure $\pi_{0}$. Suppose there are $N$ observations $\{x_{n}\}_{n=1}^{N}\subset\calX$, the data space, that are independently and identically distributed. Assume the likelihood for each observation is $l$ which takes as input an observation from $\calX$ and a parameter from $\Theta$. Then, assuming the observations are independent and identically distributed, the likelihood for all the data is $L(\theta) \coloneqq\prod_{n=1}^{N}l(\theta,x_{n})$, or using shorter notation $L = \prod_{n=1}^{N}l_{x_{n}}$ where $l_{x_{n}}(\theta) \coloneqq l(\theta,x_{n})$. Bayes' theorem states the posterior measure $\pi$ over $\Theta$, given the prior, likelihood and observations, satisfies 
\begin{align}
	\frac{\diff\pi}{\diff\pi_{0}} = Z^{-1}L,\label{eq:Bayes_rule}
\end{align}
where $Z = \bbE_{\pi_{0}}[L]$ is a constant and is known as the evidence. It is implicitly assumed that the prior, likelihood and data are such that $Z<\infty$ which is a very mild and common assumption. 

Mathematically, \eqref{eq:Bayes_rule} shows that the Radon-Nikodym derivative of the posterior with respect to the prior is proportional to the likelihood. Therefore, using the definitions of $=_{B}$ and $p_{\pi,\pi_{0}}$ given in Section \ref{sec:BHS}
\begin{align}
	p_{\pi,\pi_{0}} =_{B}L =_{B}\bigoplus_{n=1}^{N}l_{x_{n}}. \label{eq:post_in_BHS} 
\end{align} 
This means shows that Bayes' theorem is simply a sum over the likelihood terms when written in Bayes Hilbert space notation. This is the sense in which the Bayes Hilbert space operations are coherent with Bayes' theorem. Another helpful property of a Bayes Hilbert space is the definition of $=_{B}$ as being equality up to scalar constant. This is useful since in practice posterior sampling algorithms are agnostic of scalar constants and therefore a mathematical framework for posterior approximation should also be invariant to scalar constants. 

\begin{example}\label{exp:log_reg_intro}
	An example of Bayesian inference that will recur is logistic regression. Suppose one observes input output pairs $x_{n} = \{u_{n},y_{n}\}$ with $u_{n}\in\bbR^{d}$ and $y_{n}\in\{0,1\}$. For example, $u_{n}$ might represent medical information about the $n$-th person in a population and $y_{n}$ indicates whether they do or do not have a certain illness. A model for the regression problem of predicting $y_{n}$ given $u_{n}$ is logistic regression \citep[Chapter 3.7]{Gelman2013}. The parameter space is $\Theta = \bbR^{d}$ and for parameter choice $\theta$ the response $y_{n}$ is modelled as ${y_{n}\sim\emph{Bernoulli}(1/1+e^{-\langle \theta,u_{n}\rangle_{\bbR^{d}}})}$ meaning the likelihood is
	\begin{align*}
		l_{x_{n}}(\theta) = \frac{y_{n}}{1+e^{-\langle \theta,u_{n}\rangle_{\bbR^{d}}}} + \frac{(1-y_{n})e^{-\langle \theta,u_{n}\rangle_{\bbR^{d}}}}{1+e^{-\langle \theta,u_{n}\rangle_{\bbR^{d}}}}.
	\end{align*}
	A commonly used prior for $\theta$ is $\pi_{0} = \emph{N}(0,I_{d})$, the standard multivariate Gaussian. 
\end{example}

Under the assumption that $\pi\in B^{2}(\mu)$ the CLR for $\pi$ is 
\begin{align*}
	\Psi_{\mu}(\pi) = \log p_{\pi,\mu} - \bbE_{\mu}[\log p_{\pi,\mu}].
\end{align*}
Under the additional assumption $\pi_{0}\in B^{2}(\mu)$, using the chain rule for Radon-Nikodym derivatives $p_{\pi,\mu} = p_{\pi,\pi_{0}}p_{\pi_{0},\mu}$ the CLR transform can be written 
\begin{align}
	\Psi_{\mu}(\pi) & = \log p_{\pi,\pi_{0}} - \bbE_{\mu}[\log p_{\pi,\pi_{0}}] + (\log p_{\pi_{0},\mu} - \bbE_{\mu}[\log p_{\pi_{0},\mu}])\nonumber\\
	& = \calL - \bbE_{\mu}\left[\calL\right] + (\log p_{\pi_{0},\mu} - \bbE_{\mu}[\log p_{\pi_{0},\mu}]) \label{eq:CLR_post},
\end{align}
where $\calL\coloneqq \log L = \sum_{n=1}^{N}\log l_{x_{n}}$ is the log-likelihood. Note that if $\mu = \pi_{0}$ then the final term in brackets is zero since $p_{\pi_{0},\pi_{0}} = 1$ and the CLR transform becomes equal to the centred log-likelihood function. 

\begin{example}
	Taking $\mu = \pi_{0}$ then continuing Example \ref{exp:log_reg_intro} the CLR for the logistic regression model is
	\begin{align*}
		\Psi_{\pi_{0}}(\pi) & = \sum_{n=1}^{N}\log\left( \frac{y_{n}}{1+e^{-\langle \theta,u_{n}\rangle_{\bbR^{d}}}} + \frac{(1-y_{n})e^{-\langle \theta,u_{n}\rangle_{\bbR^{d}}}}{1+e^{-\langle \theta,u_{n}\rangle_{\bbR^{d}}}}\right)\\
		& - \sum_{n=1}^{N}\bbE_{\pi_{0}}\left[ \log\left(\frac{y_{n}}{1+e^{-\langle \theta,u_{n}\rangle_{\bbR^{d}}}} + \frac{(1-y_{n})e^{-\langle \theta,u_{n}\rangle_{\bbR^{d}}}}{1+e^{-\langle \theta,u_{n}\rangle_{\bbR^{d}}}}\right)\right].
	\end{align*}

\end{example}

The importance of these representations from a computational Bayesian point of view should be considered. The main focus in computational Bayesian statistics is the posterior measure. A computational Bayesian typically wishes to draw samples from the posterior and will almost always have to use a sampling algorithm. The typical cost is $O(N)$ per iteration of the sampling algorithm, with many thousands of iterations typically required, therefore the overall cost becomes prohibitive for large data sets. In this large data scenario one usually employs either a sampling algorithm which makes cheap approximations, such as subsampling data during the execution of the sampling algorithm, or will try and form a cheap approximation of the posterior and sample from that instead. The difficulty in the latter approach is that measures are not typically viewed as lying in a space that is applicable to standard approximation and optimisation theory. 

The Bayes Hilbert space provides appealing structure in which to view the posterior. First, it possesses an appropriate notion of equality as it is agnostic to scaling constants, much like common sampling algorithms. Second, the Hilbertian structure facilitates the use of approximation theory, concentration inequalities and projections which will all play a role in the rest of this manuscript to construct approximations to the posterior. Later, Theorem \ref{thm:bounds} in Section \ref{sec:bounds} shows that the norm in a Bayes Hilbert space can bound commonly used distances between measures and therefore approximation in a Bayes Hilbert space results in minimising commonly used distances.

\subsection{Related Work}\label{sec:existing}
The central idea of Bayes Hilbert spaces is to construct a Hilbert space into which measures shall be mapped. The Hilbert space provides nice structure to form distances, construct approximations and perform optimisation. The application of Bayes Hilbert spaces to computational Bayesian problems beyond expressing Bayes rule in terms of Bayes Hilbert space operations \citep{Boogaart2010} is currently modest. This is because the application of Bayes Hilbert spaces has so far focused on distributional data analysis \citep{Petersen2022,Mateu2021}. The sole contribution to the problem of posterior approximation using a Bayes Hilbert space is \citet{Barfoot2023} which frames variational inference in terms of a Bayes Hilbert space.

Other frameworks exists which embed measures into other function spaces. In information geometry, the most similar topic are exponential manifolds \citep{Pistone1995,Cena2006,Pistone2013}. Informally, this is where a manifold is constructed over the space of probability measures such that at a given measure, call it the reference measure, a Banach space is constructed and a subset found which is homeomorphic to the space of measures that are absolutely continuous with respect to the reference measure. The connection between this method and Bayes Hilbert spaces is that $B^{2}(\mu)$ plays the roles of the reference measure space and $L^{2}_{0}(\mu)$ the Banach space, in this context known as the tangent space. In fact, the CLR transform also features in this method, see \citet[Equation 23]{Pistone1995}. In \citet{Pistone1995} the Banach space was an Orlicz space which enforces stronger integrability conditions than $L^{2}_{0}(\mu)$ to ensure that the corresponding measures are all finite, unlike in $B^{2}(\mu)$. Connections between information geometry and compositional data analysis have been made by \citet{Erb2021} but extending these to Bayes Hilbert spaces was left for future work. 

Orlicz spaces are rather technical and other work has focused on constructing similar manifolds which use Hilbert spaces instead. This includes \citet{Newton2012} which also uses $L^{2}_{0}(\mu)$ as its Hilbert space but uses a map slightly different from the CLR transform, see \citet[Equation 6]{Newton2012}. A reproducing kernel Hilbert space approach, see Section \ref{sec:MMD}, was performed by \citet{Fukumizu2009} in which the Banach space is a reproducing kernel Hilbert space. The CLR transform appears again in \citet[Theorem 18.1]{Fukumizu2009}. This approach is appealing since reproducing kernel Hilbert spaces are very nice spaces to study and manipulate. However, one main difference is that the set of measures absolutely continuous with respect to a given base measure is only homeomorphic to a subset of a Hilbert space. This means it does not inherit the vector space structure from the Hilbert space. This is in contrast to how Bayes Hilbert spaces have vector space structure. 

The purpose of all these methods in information geometry, as the name of the field suggests, is to better understand the geometry of spaces of measures rather than to directly approximate them. The closest to tackling an approximation problem is \citet{Sriperumbudur2011inf} which uses the reproducing kernel Hilbert space approach of \citet{Fukumizu2009} to construct density estimators from samples of a distribution, which is still a distinct task to posterior approximation. Another distinguishing factor between the Bayes Hilbert space approach and the exponential manifold approach is that in the latter the space of measures is bijective with a subset of a function space, not a subspace like the former, since they are tangent spaces to the manifold. More discussion is given in Section \ref{sec:Conclusion} regarding the potential application of exponential manifolds to the problems discussed in this manuscript.  

Another area which uses maps from measures into Hilbert spaces are kernel mean embeddings \citep{Muandet2017,Gretton2012,Berlinet2004,Guilbart1978}. This method is discussed at length in Section \ref{sec:MMD}. The idea is to represent a measure using a kernel and then reason with the representation of the measure rather than the measure itself, for example forming distances and performing hypothesis testing. This method has also been applied to Bayesian computation \citep{Fukumizu2013}. The crucial difference between this approach and the Bayes Hilbert space and exponential manifold approach is that the map from measures to the Hilbert space is not easy to invert, both theoretically and practically. This makes the kernel mean embedding approach difficult to apply to the posterior approximation. The kernel mean embedding distance will appear in Theorem \ref{thm:MMD_BHS} as a way to relate distances in the Bayes Hilbert space to distances between the empirical measures of observed data in posterior approximation. 


\section{Measure Discrepancies and Bayes Hilbert Spaces}\label{sec:bounds}
The aim of this section is to prove a novel result which shows the Bayes Hilbert space norm between two measures upper bounds typical notions of discrepancy between measures. Specifically, Theorem \ref{thm:bounds} provides bounds for the Hellinger, Kullback-Leibler and Wasserstein-$1$ distances in terms of a Bayes Hilbert space norm. Such bounds are important if the Bayes Hilbert space is to be used for posterior approximation as they show that the approximation is meaningful. The bounds are obtained by a simple application of a recent result from the Bayesian inverse problems literature regarding the stability of posterior measures \citep{Sprungk2020}. This result is possible since the notion of error of posterior approximation employed within Bayesian inverse problem literature coincides with the distance in Bayes Hilbert spaces. 

Before the result is stated some notation needs to be outlined. First,  $\Theta$ is assumed to be a separable, complete metric space with metric $d_{\Theta}$ and let $\calB(\Theta)$ denote the set of finite Borel measures on $\Theta$ and $\calP(\Theta)\subset\calB(\Theta)$ the subset of probability measures. In all three following definitions, $\eta,\nu\in\calB(\Theta)$.

Assuming $\eta,\nu$ are both absolutely continuous with respect to some measure $\lambda$, such a $\lambda$ always exists for example $\lambda = \eta + \nu$, then the square of the Hellinger distance is defined
\begin{align}
	\text{H}(\eta,\nu)^{2} = \frac{1}{2}\int_{\Theta}\left(\sqrt{\frac{\diff\eta}{\diff\lambda}} -\sqrt{\frac{\diff\nu}{\diff\lambda}} \right)^{2}\diff\lambda,\label{eq:Hellinger}
\end{align} 
where this definition is invariant to the choice of $\lambda$. 

Assuming $\eta$ is absolutely continuous with respect to $\nu$ the Kullback-Leibler (KL) divergence is defined
\begin{align*}
	\KL{\eta}{\nu} = \int_{\Theta}\log\frac{\diff\eta}{\diff\nu}\diff\eta,
\end{align*}
for more discussion on the KL divergence see Section \ref{sec:KL_coresets}.

Let $\text{Lip}(1)$ denote the set of functions $f\colon\Theta\rightarrow\bbR$ with Lipschitz constant less than or equal to one, then the Wasserstein-$1$ distance is defined
\begin{align*}
	\text{W}_{1}(\eta,\nu) = \sup_{f\in\text{Lip}(1)}\left\lvert\int_{\Theta}f\diff\eta - \int_{\Theta}f\diff\nu\right\rvert.
\end{align*}
For $\eta\in\calP(\Theta)$ define 
\begin{align*}
	\norm{\eta}_{\calP^{2}} & = \inf_{\theta_{0}\in\Theta}\left(\int_{\Theta}d_{\Theta}(\theta,\theta_{0})^{2}\diff\eta(\theta)\right)^{1/2}\\
	\calP^{2}(\Theta) & = \{\eta\in\calP(\Theta)\colon\norm{\eta}_{\calB^{2}}<\infty\},
\end{align*} 
which is a quantity which will occur in the Wasserstein-$1$ bound that provides a notion of size of the sample space with respect to the measure and the metric on the space. Finally, for $f\in L^{2}(\mu)$, $\esssup_{\mu}f$ is the essential supremum of $f$ and if there is an $\tilde{f}$ in the $L^{2}(\mu)$-equivalence class of $f$ such that $\tilde{f}(\theta)\leq B\:\forall\theta\in\Theta$ then $\esssup_{\mu}f\leq B$. For a more detailed description of essential supremum consult \citet[Section 2.11]{Bogachev2007}.

\begin{theorem}\label{thm:bounds}
	Let $\Theta$ be a complete, separable metric space and $\mu\in\calP^{2}(\Theta),\eta,\nu\in\calB(\Theta)$. Let $\eta,\nu\in B^{2}(\mu)$ with
	\begin{align}
		\frac{\diff\eta}{\diff\mu} & = Z_{\eta}^{-1}\exp(\Psi_{\mu}(\eta))\qquad Z_{\eta} = \bbE_{\mu}[\exp(\Psi_{\mu}(\eta))]\label{eq:post_RN}\\
		\frac{\diff\nu}{\diff\mu} & = Z_{\nu}^{-1}\exp(\Psi_{\mu}(\nu))\qquad Z_{\nu} = \bbE_{\mu}[\exp(\Psi_{\mu}(\nu))].\label{eq:apprx_RN}
	\end{align}
	Assume that $\esssup_{\mu}\Psi_{\mu}(\eta),\esssup_{\mu}\Psi_{\mu}(\nu)\leq B$ then 
	\begin{align*}
		\emph{H}(\eta,\nu) & \leq \frac{1}{2}\left(e^{B}+e^{2B}\right)^{1/2}\norm{\eta-\nu}_{B^{2}(\mu)} \\
		\emph{KL}\infdivx{\eta}{\nu} & \leq 2e^{B}\norm{\eta-\nu}_{B^{2}(\mu)}\\
		\emph{W}_{1}(\eta,\nu) & \leq (e^{B} + e^{2B})\norm{\mu}_{\calP^{2}}\norm{\eta-\nu}_{B^{2}(\mu)}.
	\end{align*}
\end{theorem}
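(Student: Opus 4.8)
The plan is to reduce all three bounds to a single scalar quantity---the $L^{2}(\mu)$ distance between the two centred log-ratios---and then control each distance by elementary inequalities for the exponential. Write $f\coloneqq\Psi_{\mu}(\eta)$ and $g\coloneqq\Psi_{\mu}(\nu)$, so that by Theorem \ref{thm:BHS_L2} both lie in $L^{2}_{0}(\mu)$ and, by \eqref{eq:BHS_norm}, one has the key identity $\norm{f-g}_{L^{2}(\mu)}=\norm{\eta-\nu}_{B^{2}(\mu)}$. This is exactly the observation flagged in the text: the densities \eqref{eq:post_RN}--\eqref{eq:apprx_RN} are posteriors with potentials $-f,-g$, so every stability estimate of \citet{Sprungk2020} phrased in the potential distance becomes a bound in the Bayes Hilbert norm. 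First I would record the two facts about the normalisers $Z_{\eta}=\bbE_{\mu}[e^{f}]$ and $Z_{\nu}=\bbE_{\mu}[e^{g}]$ that drive everything: the essential-sup hypothesis gives $Z_{\eta},Z_{\nu}\leq e^{B}$, while the zero-mean property of the CLR together with Jensen's inequality gives $Z_{\eta}=\bbE_{\mu}[e^{f}]\geq e^{\bbE_{\mu}[f]}=1$, and likewise $Z_{\nu}\geq1$. The clean sandwich $1\leq Z\leq e^{B}$ is where the $B^{2}(\mu)$ structure genuinely helps, since it removes any dependence on lower bounds for the potentials.

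The two workhorse estimates are then as follows. Because $f,g\leq B$ $\mu$-almost everywhere, the mean value theorem applied to $x\mapsto e^{ax}$ on $(-\infty,B]$ gives the pointwise bound $\abs{e^{af}-e^{ag}}\leq a\+e^{aB}\abs{f-g}$, with $a=\tfrac12$ for the square roots entering the Hellinger distance and $a=1$ elsewhere; Cauchy--Schwarz against the probability measure $\mu$ then converts the resulting $L^{1}(\mu)$ control of $\abs{f-g}$ into $\norm{f-g}_{L^{2}(\mu)}$. In particular $\abs{Z_{\eta}-Z_{\nu}}\leq\bbE_{\mu}\abs{e^{f}-e^{g}}\leq e^{B}\norm{f-g}_{L^{2}(\mu)}$, which, combined with $1\leq Z\leq e^{B}$, controls every difference of normalisers (and of their reciprocals or square roots) appearing below.

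With these in hand each distance follows by a short decomposition. For Hellinger I would split $Z_{\eta}^{-1/2}e^{f/2}-Z_{\nu}^{-1/2}e^{g/2}$ into an un-normalised part $Z_{\eta}^{-1/2}(e^{f/2}-e^{g/2})$ and a normaliser part $(Z_{\eta}^{-1/2}-Z_{\nu}^{-1/2})e^{g/2}$, bound the two $L^{2}(\mu)$ norms separately using the estimates above, and combine them with $(x+y)^{2}\leq2x^{2}+2y^{2}$; the two contributions produce the $e^{B}$ and $e^{2B}$ terms and, after the factor $\tfrac12$ in \eqref{eq:Hellinger}, the constant $\tfrac12(e^{B}+e^{2B})^{1/2}$. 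For the KL divergence I would expand $\log\tfrac{\diff\eta}{\diff\nu}=(f-g)+\log(Z_{\nu}/Z_{\eta})$ and integrate against $\eta$, bounding the first term by $e^{B}\norm{f-g}_{L^{2}(\mu)}$ (using $Z_{\eta}\geq1$ and $e^{f}\leq e^{B}$) and the second by $\abs{\log Z_{\nu}-\log Z_{\eta}}\leq\abs{Z_{\nu}-Z_{\eta}}\leq e^{B}\norm{f-g}_{L^{2}(\mu)}$, since $Z\geq1$ makes $\log$ nonexpansive, giving $2e^{B}$. For Wasserstein I would centre each competing $f\in\text{Lip}(1)$ at a point $\theta_{0}$, which is legitimate because $\eta,\nu$ are probability measures, so that $\abs{f(\theta)}\leq d_{\Theta}(\theta,\theta_{0})$; writing the integral difference as $\int f(\tfrac{\diff\eta}{\diff\mu}-\tfrac{\diff\nu}{\diff\mu})\diff\mu$ and applying Cauchy--Schwarz produces the factor $(\int d_{\Theta}(\theta,\theta_{0})^{2}\diff\mu)^{1/2}$, whose infimum over $\theta_{0}$ is $\norm{\mu}_{\calP^{2}}$, times the $L^{2}(\mu)$ norm of the density difference, which the triangle inequality bounds by $(e^{B}+e^{2B})\norm{f-g}_{L^{2}(\mu)}$.

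I expect the main obstacle to be purely the bookkeeping of constants rather than anything conceptual: one must track carefully how the reciprocal- and square-root-normaliser differences combine with the pointwise exponential bound, and in particular notice that Hellinger requires the lossy $(x+y)^{2}\leq2x^{2}+2y^{2}$ split whereas Wasserstein is tight under the ordinary triangle inequality, which is exactly what makes the former constant appear under a square root. The only genuine hypothesis checks are that $\mu\in\calP^{2}(\Theta)$ ensures $\norm{\mu}_{\calP^{2}}<\infty$ so the Wasserstein bound is non-vacuous, and that $\eta,\nu\in B^{2}(\mu)$ with $\esssup_{\mu}\Psi_{\mu}(\eta),\esssup_{\mu}\Psi_{\mu}(\nu)\leq B$ place the problem in exactly the regime covered by \citet{Sprungk2020}.
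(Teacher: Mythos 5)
Your proposal is correct and follows essentially the same route as the paper's proof: Jensen's inequality for the lower bound $Z_{\eta},Z_{\nu}\geq 1$, the local Lipschitz bound $\lvert e^{x}-e^{y}\rvert\leq e^{\max(x,y)}\lvert x-y\rvert$ below the level $B$, the same un-normalised/normaliser decompositions for each of the three distances, and Cauchy--Schwarz to pass from $L^{1}(\mu)$ to $L^{2}(\mu)$, recovering the identical constants. The only (immaterial) difference is in the Wasserstein-$1$ step, where you apply Cauchy--Schwarz to the density difference before splitting it, whereas the paper splits first and bounds the two integrals separately, picking up $\norm{\mu}_{\calP^{1}}\leq\norm{\mu}_{\calP^{2}}$ along the way.
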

The proof is contained in the Appendix and is a straightforward application of the results in \citet{Sprungk2020}. A bound using an $L^{1}(\mu)$ norm also holds for the KL and Wasserstein-$1$ case and $\mu$ being a probability measure can be relaxed to $\mu$ being a finite measure as the cost of an extra constant, as discussed in the proof. 
So long as the upper bound $B$ is controlled, Theorem \ref{thm:bounds} shows that the Bayes Hilbert space norm converging to zero implies that the other three notions of distance converge to zero too. The most similar result is \citet[Proposition 6.3]{Huggins2018} which provides a bound on the Wasserstein-$1$ and Wasserstein-$2$ distances, the difference is that an $L^{2}$ norm is used which involves derivatives of the log-likelihoods rather than the Bayes Hilbert space norm. Corollary \ref{cor:bounds} in Section \ref{sec:Bayes_Coresets} deals with the specific case of approximating a posterior measure. 

Implicit in all of this is the choice of the base measure $\mu$. The choice does not impact the left hand side of the bounds and will impact the right hand side by changing $B$ and the value of the norm. Theorem \ref{thm:bounds} provides a guiding principal of choosing $\mu$ to make $B$ as small as possible.  

\begin{example}\label{exp:log_reg_B}
	Continuing the logistic regression example, since $l_{x_{n}}\leq 1$ the log-likelihood is bounded as $\log l_{x_{n}}\leq 0$ therefore the CLR is bounded as
	\begin{align*}
					\Psi_{\pi_{0}}(\pi)\leq - \sum_{n=1}^{N}\bbE_{\pi_{0}}\left[ \log\left(\frac{y_{n}}{1+e^{-\langle \theta,u_{n}\rangle_{\bbR^{d}}}} + \frac{(1-y_{n})e^{-\langle \theta,u_{n}\rangle_{\bbR^{d}}}}{1+e^{-\langle \theta,u_{n}\rangle_{\bbR^{d}}}}\right)\right],
	\end{align*}
	which can be used as a bound in Theorem \ref{thm:bounds}.
\end{example}

The notation for the $Z_{\eta},Z_{\nu}$ terms in the Radon-Nikodym derivatives in \eqref{eq:post_RN},\eqref{eq:apprx_RN} is atypical since they do not factor out all the scalar factors. Specifically, since the CLR map $\Psi_{\mu}$ involves centering by a constant, see \eqref{eq:CLR_post}, there is still a multiplicative factor which could be absorbed into the normalising terms. This was done when introducing the posterior in \eqref{eq:Bayes_rule} with the $Z$ term. The reason this is not done in Theorem \ref{thm:bounds} is to retain the full expressions for Bayes Hilbert space norm in the right hand side.

\begin{example}
	Continuing the logistic regression example, the $Z$ term in \eqref{eq:Bayes_rule} would be 
	\begin{align*}
		Z = \bbE_{\pi_{0}}\left[L\right] = \bbE_{\pi_{0}}\left[\exp\left(\calL\right)\right]
	\end{align*} 
where $L = \prod_{n=1}^{N}l_{x_{n}},\calL = \log L$. In contrast to this the $Z_{\pi}$ term in Theorem \ref{thm:bounds} is 
\begin{align*}
	Z_{\pi} = \bbE_{\pi_{0}}\left[\exp(\Psi_{\pi_{0}}(\pi))\right] = \bbE_{\pi_{0}}\left[\exp(\calL - \bbE_{\pi_{0}}[\calL])\right].
\end{align*}
\end{example}

Typically, bounds of the type used in Theorem \ref{thm:bounds}, see \citet{Sprungk2020}, assume that $\calL\leq 0$ and involve $Z^{-1}$ as a constant, where $Z = \bbE_{\pi_{0}}[L]$. Note $\calL\leq 0$ is a very mild assumption and can be obtained when assuming first $\calL \leq C$ for constant $C$ and then using $\calL - C$, see \citet[Section 2]{Sprungk2020}. The task of upper bounding $Z^{-1}$ is closely related to the upper bound assumption on $\Psi_{\pi_{0}}$ in Theorem \ref{thm:bounds}. To see this note that $Z^{-1} = \bbE_{\pi_{0}}[\exp(\calL)]^{-1}\leq \exp(-\bbE_{\pi_{0}}[\calL])$ by Jensen's inequality. When $\calL\leq 0$ then the bound $B$ only needs to satisfy $-\bbE_{\pi_{0}}[\calL]\leq B$. Hence $Z^{-1}\leq e^{B}$. This shows how despite using different normalising constants in the Radon-Nikodym derivative representation, to ensure the bounds still involve the CLR transform, Theorem \ref{thm:bounds} recovers constants in the upper bounds typical of results in the Bayesian inverse problems literature, for example \citet[Theorem 5]{Sprungk2020}, with the difference being the slack in Jensen's inequality.


\section{Bayesian Coresets and Bayes Hilbert Spaces}\label{sec:Bayes_Coresets}
The aim of this section will be to introduce Bayesian coreset posterior approximations \citep{Campbell2019,Huggins2016} and provide a novel relationship between them and Bayes Hilbert spaces. This connection is crucial for the rest of this manuscript and will be analysed further in later sections. 

As mentioned in the previous section, for a likelihood that is a product of $N$ terms, the typical cost per iteration of a sampling algorithm is $O(N)$. This is because typical sampling algorithms have to ``touch'' each data point in the likelihood during each of their iterations. The idea of a Bayesian coreset is to approximate the likelihood with a weighted product of $M$ terms. The resulting approximation will have $O(M)$ cost which can be substantially cheaper if $M\ll N$. The first paper on Bayesian coresets was focused on logistic regression \citep{Huggins2016}. Since then the area has developed widely and has produced multiple innovations, both in terms of how the distance to the full posterior is evaluated and in the optimisation methods used to form the coreset \citep{Campbell2018,Campbell2019,Manousakas2020,Zhang2021,Naik2022}.

Retaining the same setting as Section \ref{sec:Bayes_theorem}, the prior is $\pi_{0}$, the observed data $\{x_{n}\}_{n=1}^{N}\subset\calX$, the individual likelihood functions $l_{x_{n}}(\theta)\coloneqq l(\theta,x_{n})$ and the full likelihood $L = \prod_{n=1}^{N}l_{x_{n}}$. For the rest of this manuscript, a Bayesian coreset is a collection of points $\{z_{m}\}_{m=1}^{M}\subset\calX$ and non-negative weights $\{w_{m}\}_{m=1}^{M}\subset \bbR_{\geq 0}$ for some $M\in\bbN$ and the Bayesian coreset posterior approximation corresponding to the Bayesian coreset is the measure $\pi_{w,z}$ satisfying
\begin{align}
	\frac{\diff \pi_{w,z}}{\diff \pi_{0}} =  Z_{w,z}^{-1} L_{w,z},\label{eq:coreset_aprx} 
\end{align}
where $L_{w,z} = \prod_{m=1}^{M}l_{z_{m}}^{w_{m}}$ and $Z_{w,z} = \bbE_{\pi_{0}}[L_{w,z}]$. Non-negative weights are used so that the resulting posterior is valid in the sense that it has finite measure. This approximation keeps the prior the same and approximates the posterior by using the likelihood $L_{w,z}$ instead of $L$. Note that if $M=N$ and  $w_{m} = 1,z_{m}=w_{m}\:\forall m\:$ then $L = L_{w,z}$ and if $w_{m} = 0$ then $l_{z_{m}}$ plays no role in the approximation. 

The idea behind this approximation is that some of the observed data points may be redundant and do not need to be included, or they may be accounted for by re-weighting a likelihood term based on a different data point. The fact that $\{z_{m}\}_{m=1}^{M}$ does not have to be a subset of the observed data $\{x_{n}\}_{n=1}^{N}$ facilitates a flexible and efficient approximation. The canonical example in this case is a Gaussian mean inference task. In this scenario the posterior can be written exactly with $M=1$, see \citet[Section 3]{Manousakas2020}. The scenario where $\{z_{m}\}_{m=1}^{M}\not\subset\{x_{n}\}_{n=1}^{N}$ was named a \textit{pseudo-coreset} in \citet{Manousakas2020}. This terminology is not maintained in the present discussion since the general case, where no assumptions are placed on $\{z_{m}\}_{m=1}^{M}$ so it could or could not be a subset of $\{x_{n}\}_{n=1}^{N}$, is investigated. 

The CLR transform of $\pi_{w,z}$ is easily obtained by using \eqref{eq:CLR_post}. Let $\mu$ be the base measure and assume that $\pi_{w,z},\pi_{0}\in B^{2}(\mu)$, then
\begin{align}
	\Psi_{\mu}(\pi_{w,z}) = \calL_{w,z} - \bbE_{\mu}\left[\calL_{w,z}\right] + (\log p_{\pi_{0},\mu} - \bbE_{\mu}[\log p_{\pi_{0},\mu}]),\label{eq:CLR_coreset}
\end{align}
where $\calL_{w,z} \coloneqq \log L_{w,z} = \sum_{m=1}^{M}w_{m}\log l_{z_{m}}$ is the weighted log-likelihood based on $\{z_{m}\}_{m=1}^{M}$.

The Bayesian coreset approximation is leveraging the structure inherent in the Bayesian methodology that the Radon-Nikodym derivative of the posterior with respect to the prior is equal, up to a scalar constant, to a product of likelihoods. This means the full posterior can be recovered exactly with the choice $M=N$ and  $w_{m} = 1,z_{m}=w_{m}\:\forall m\:$. Therefore, exact recovery only involves a finite number of parameter choices. This is different to typical function approximation methods where the target function can only be recovered exactly with an infinite number of parameter choices, for example an infinite basis expansion. 

Some simple choices of $\mu$ in the Bayesian coreset scenario are are $\mu=\pi_{0}$, setting $\mu$ to be the posterior based upon a random subset of the data points and a Laplace approximation to the posterior \citep{Campbell2019}. It is currently an open question as to how to chose $\mu$ and what it means for $\mu$ to be a best choice. Intuitively one hopes that $\mu$ has most mass where the posterior has most mass but this causes a chicken and the egg problem as the entire task is to approximate the posterior. as mentioned after Theorem \ref{thm:bounds} a guide can be to choose a $\mu$ which minimises the bound $B$ using in Theorem \ref{thm:bounds}.

The expressions for $\pi,\pi_{w,z}$ can be inserted into Theorem \ref{thm:bounds} to understand how the Bayes Hilbert space norm implies a bound on the common notion of distance for probability measures in this particular case. A typical scenario for Bayesian coresets is having $M=N$ and $z_{n}=x_{n}$ and only changing the weights, with some weights being zero to provide sparsity. The next lemma applies Theorem \ref{thm:bounds} to this case.

\begin{lemma}\label{cor:bounds}
	Under the assumptions of Theorem \ref{thm:bounds}, if $\calL\leq C$ for some constant $C$, $M=N$, $z_{n}=x_{n}\:\forall n$, and the weights $w_{n}$ are non-negative with $\norm{w}_{\infty}\leq W$ then $B$ can be set as $B =-\max(1,W)\bbE_{\mu}[\calL]+C$.
\end{lemma}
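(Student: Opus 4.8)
The plan is to check that the proposed value of $B$ meets the hypothesis $\esssup_\mu\Psi_\mu(\eta),\esssup_\mu\Psi_\mu(\nu)\le B$ of Theorem \ref{thm:bounds}, applied with $\eta=\pi$ the full posterior and $\nu=\pi_{w,z}$ the coreset approximation; the lemma then reduces to bounding two essential suprema. I would specialise to the base measure $\mu=\pi_0$, the canonical choice for which the prior-difference term $(\log p_{\pi_0,\mu}-\bbE_\mu[\log p_{\pi_0,\mu}])$ appearing in \eqref{eq:CLR_post} and \eqref{eq:CLR_coreset} vanishes, so that both transforms collapse to centred log-likelihoods, $\Psi_\mu(\pi)=\calL-\bbE_\mu[\calL]$ and $\Psi_\mu(\pi_{w,z})=\calL_{w,z}-\bbE_\mu[\calL_{w,z}]$, with $\calL_{w,z}=\sum_{n=1}^N w_n\log l_{x_n}$ after using $M=N$ and $z_n=x_n$. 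All the remaining work is then on these two scalar-valued functions.

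For the full posterior the bound is immediate: because $\bbE_\mu[\calL]$ is a constant, $\esssup_\mu\Psi_\mu(\pi)=\esssup_\mu\calL-\bbE_\mu[\calL]\le C-\bbE_\mu[\calL]$ by the hypothesis $\calL\le C$. For the coreset term I would invoke the per-term sign information $\log l_{x_n}\le 0$ (exactly as in Example \ref{exp:log_reg_B}, where $l_{x_n}\le 1$) together with $0\le w_n\le W$ at two points. First, $\calL_{w,z}=\sum_n w_n\log l_{x_n}\le 0$ yields $\esssup_\mu\calL_{w,z}\le 0$. Second, the pointwise inequality $w_n\log l_{x_n}\ge W\log l_{x_n}$, valid since $\log l_{x_n}\le 0$ and $w_n\le W$, gives $\calL_{w,z}\ge W\calL$ and hence $-\bbE_\mu[\calL_{w,z}]\le -W\bbE_\mu[\calL]$. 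Combining the two, $\esssup_\mu\Psi_\mu(\pi_{w,z})\le -W\bbE_\mu[\calL]$.

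It then remains to absorb both estimates into one value of $B$. Since $\log l_{x_n}\le 0$ forces $\bbE_\mu[\calL]\le 0$, both $-\bbE_\mu[\calL]\le -\max(1,W)\bbE_\mu[\calL]$ and $-W\bbE_\mu[\calL]\le -\max(1,W)\bbE_\mu[\calL]$ hold, so the full-posterior bound $C-\bbE_\mu[\calL]$ and the coreset bound $-W\bbE_\mu[\calL]$ are each dominated by $C-\max(1,W)\bbE_\mu[\calL]=B$, the coreset case also using $C\ge 0$ (which is available since $\calL\le 0$ permits choosing $C\ge 0$). This is precisely why the factor $\max(1,W)$, rather than $1$ or $W$ alone, is needed: it simultaneously covers the unit coefficient of the full likelihood and the weight-$W$ coefficient of the coreset. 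The only step carrying real content, and the main obstacle, is the coreset essential supremum: the weights can amplify individual likelihood contributions, so one genuinely needs the per-term bound $\log l_{x_n}\le 0$ to control $\esssup_\mu\calL_{w,z}$ and $\bbE_\mu[\calL_{w,z}]$ at once, whereas the aggregate assumption $\calL\le C$ by itself does not constrain the reweighted supremum.
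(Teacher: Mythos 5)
Your proof is correct and takes essentially the same route as the paper's: both arguments bound $\esssup_{\mu}\Psi_{\mu}(\pi)$ and $\esssup_{\mu}\Psi_{\mu}(\pi_{w,z})$ through the centred (weighted) log-likelihoods and use the weight bound together with per-term nonpositivity of the log-likelihoods so that the single factor $\max(1,W)$ covers the unit-weight full likelihood and the $W$-weighted coreset simultaneously. If anything, you are more explicit than the paper's terse proof, which silently takes $\mu=\pi_{0}$ (dropping the prior bracket in \eqref{eq:CLR_post}), silently uses $\bbE_{\mu}[\log l_{x_{n}}]\leq 0$ term by term in its displayed inequality, and needs $C\geq 0$ for its ``analogous argument'' applied to $\calL_{w,z}$ --- precisely the hidden requirements you identify and handle.
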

\begin{proof}
	Under the assumptions, $\Psi_{\mu}(\pi) = \calL -C - \bbE_{\mu}[\calL]+C\leq -\bbE_{\mu}[\calL]+C$ with the analogous argument for $\Psi_{\mu}(\pi_{w,z})$. Using the assumptions on $w_{m},z_{m}$
	\begin{align*}
		-\bbE_{\mu}[\calL_{w,z}] + C = \sum_{n=1}^{N}-w_{m}\bbE_{\mu}[l_{z_{m}}] + C& \leq \max(1,W)\sum_{n=1}^{N}-\bbE_{\mu}[l_{x_{n}}]+C\\
		& = -\max(1,W)\bbE_{\mu}[\calL] + C,
	\end{align*} 
	which completes the proof. 
\end{proof}

Lemma \ref{cor:bounds} shows that if one forms a Bayesian coreset using points that are a subset of the observed data points, which is very common \citep{Campbell2018,Campbell2019,Naik2022}, then as long as the weights are bounded the term $B$ will be agnostic of the choice of weights. The requirement that the weights are bounded is not typically explicitly enforced in existing Bayesian coreset algorithms although it is reasonable. This is because the norm of the full likelihood is $\norm{\Psi_{\mu}(\pi)}_{L^{2}(\mu)}$ so one would expect each weight to not be much larger than this overall norm value. An exception where the weights are bounded is in the importance sampling coreset methods \citep{Huggins2016}.

Armed with the approximation scheme \eqref{eq:coreset_aprx} and the ability to bound common notions of discrepancy in terms of the Bayes Hilbert space norm using Corollary \ref{cor:bounds} and Theorem \ref{thm:bounds}, the question is now how to choose the weights and the points in the Bayesian coreset approximation. Many methods have emerged in the literature over the past five years or so and the subsequent sections will be dedicated to viewing these through the lens of a Bayes Hilbert space. The first step in this process begins in the next section which makes a novel connection between the Bayes Hilbert space distance and maximum mean discrepancy distance, a kernel-based distance for measures.

\section{Maximum Mean Discrepancy and Bayes Hilbert Spaces}\label{sec:MMD}
This aim of this section is to outline a novel relationship between maximum mean discrepancy (MMD) and Bayesian coresets by relating the MMD distance to a Bayes Hilbert space distance. The relationship is made via the kernel pre-image problem \citep[Chapter 18]{Schlkopf2002}. This relationship will later be used in Section \ref{sec:Hilbert_Coresets} to investigate a genre of Bayesian coreset methods called Hilbert Bayesian coresets. The rest of this section will introduce MMD then the kernel pre-image problem and finally make the relation to Bayesian coresets.

MMD is a kernel-based discrepancy between measures and has been studied for nearly two decades in statistical machine learning \citep{Gretton2006,Gretton2012,Muandet2017} and computational statistics \citep{Teymur2021,Dwivedi2021,Mak2018}, with its origins being in the 70s in more abstract mathematical statistics under a different name \citep{Guilbart1978}. MMD has both theoretical and practical strengths. The theoretical strengths include the elegant representation in terms of a reproducing kernel Hilbert space (RKHS), explained below, which facilitates MMD being rewritten in multiple helpful ways. The main practical strength of MMD is that it can be easily estimated by simply evaluating the kernel on samples from the two measures in question. 

To begin the description of MMD one must first start with a kernel. A kernel $k\colon\calX\times\calX\rightarrow\bbR$ is a symmetric, positive definite function. The point of a kernel is that it measures similarity between its inputs. A simple way to construct a kernel is to take any function $\phi\colon\calX\rightarrow H$, which will be called a feature map, where $H$ is some real Hilbert space, called the feature space, then set $k(x,y) = \langle \phi(x),\phi(y)\rangle_{H}$. In this case the kernel is measuring the similarity between $x$ and $y$ by measuring the similarity of the features $\phi(x)$ and $\phi(y)$ using the inner product in the feature space.  

For any kernel $k$ there is a space of functions called the reproducing kernel Hilbert space that is uniquely determined by the kernel which has specific properties relating to the kernel. In particular, the RKHS, denoted $\calH_{k}$, is the unique Hilbert space of functions from $\calX$ to $\bbR$ such that $k(x,\cdot)\in\calH_{k}\:\forall\:x\in\calX$ and $\langle f,k(x,\cdot)\rangle_{k} = f(x)\:\forall f\in\calH_{k},x\in\calX$ where $\langle\cdot,\cdot\rangle_{k}$ is the inner product of $\calH_{k}$, see \citet[Section 4.2]{Steinwart2008}. The latter property is called the reproducing property and it is extremely useful since, for functions in $\calH_{k}$, it facilitates many quantities of interest to be written purely in terms of the kernel, which is easy to evaluate. For more details regarding kernels and their RKHS consult \citep{Steinwart2008,Berlinet2004,Paulsen2016}.

Before outlining MMD a point on notation. Latin letters will be used to denote measures and random variables on $\calX$, the space where observed data for the Bayesian inference problem will lie, as is common in the MMD literature \citep{Sriperumbudur2010}. This is to distinguish them from measures on $\Theta$, the parameter space, which use Greek letters as was done in Section \ref{sec:BHS}. 

MMD is now defined, for a more in depth discussion than what is presented here consult \citet{Sriperumbudur2010}. Let $\calB_{k}(\calX) = \{P\in \calB(\calX)\colon \bbE_{P}[\sqrt{k(X,X)}]<\infty\}$ where in the expectation $X$ denotes the random variable in $\calX$ with law $P$ and $\calB(\calX)$ is the set of finite Borel measures on $\calX$. Note that all finite sums of empirical measures are in $\calB_{k}$. For $P,Q\in\calB_{k}(\calX)$ the MMD using kernel $k$ is
\begin{align}
	\text{MMD}_{k}(P,Q) \coloneqq \sup_{\norm{f}_{k}\leq 1}\left\lvert\bbE_{P}[f] - \bbE_{Q}[f]\right\rvert,\label{eq:MMD_sup}
\end{align}
where the supremum is being taken over the unit ball of the RKHS. The key property of MMD is that it may be re-written in different forms to make it more interpretable and easily estimated. 

The first of these involves kernel mean embeddings. This is a mapping of a measure into the RKHS via the kernel. For a kernel $k$ the kernel mean embedding $\Phi_{k}\colon\calB_{k}(\calX)\rightarrow\calH_{k}$ is defined 
\begin{align*}
	\Phi_{k}(P) \coloneqq \int_{\calX}k(x,\cdot)\diff P(x).
\end{align*}
The idea of a kernel mean embedding is to provide a feature expansion in the RKHS of a measure $P$, much like how standard kernel methods on $\bbR^{d}$ involve feature expansions of the data. 

Analogous to how the reproducing property represents pointwise evaluation of functions in the RKHS, kernel mean embeddings represent integration of functions in the RKHS. For $f\in\calH_{k}$ and $P\in\calB_{k}(\calX)$ \begin{align}
	\bbE_{P}[f] = \int_{\calX}f(x)\diff P(x) = \int_{\calX}\langle f,k(x,\cdot)\rangle_{k}\diff P(x) = \langle f,\int_{\calX}k(x,\cdot)\diff P(x)\rangle_{k} = \langle f,\Phi_{k}(P)\rangle_{k},\label{eq:int_swap}
\end{align}
where the swapping of the integral and inner product is a result of the integrability assumption on $k$ and $P$ made when assuming $P\in\calB_{k}(\calX)$ \citep[Theorem 1]{Sriperumbudur2010}.

Kernel mean embeddings relate to MMD in the following elegant way
\begin{align}
		\text{MMD}_{k}(P,Q) & = \sup_{\norm{f}_{k}\leq 1}\left\lvert\bbE_{P}[f] - \bbE_{Q}[f]\right\rvert \nonumber\\
		& = \sup_{\norm{f}_{k}\leq 1}\left\lvert\langle f,\Phi_{k}(P)-\Phi_{k}(Q)\rangle_{k}\right\rvert \label{eq:swap_deriv}\\
		& = \norm{\Phi_{k}(P)-\Phi_{k}(Q)}_{k},\label{eq:MMD_Phi}
\end{align}
where \eqref{eq:swap_deriv} is by \eqref{eq:int_swap} and \eqref{eq:MMD_Phi} is by the Cauchy-Schwarz theorem. This shows that MMD is equal the the difference in the RKHS of the feature expansions of the two measures. 

A third representation of MMD can be obtained by starting with \eqref{eq:MMD_Phi} and again using \eqref{eq:int_swap} 
\begin{align}
	\text{MMD}_{k}(P,Q)^{2} & = \norm{\Phi_{k}(P)-\Phi_{k}(Q)}_{k}^{2}\nonumber\\
	& = \langle \Phi_{k}(P),\Phi_{k}(P)\rangle_{k} -2\langle \Phi_{k}(P),\Phi_{k}(Q)\rangle_{k}+ \langle \Phi_{k}(Q),\Phi_{k}(Q)\rangle_{k}\label{eq:rep_expand}\\
	& = \bbE_{P\times P}[k(X,X')] - 2\bbE_{P\times Q}[k(X,Y)] + \bbE_{Q\times Q}[k(Y,Y')],\label{eq:MMD_double_E}
\end{align}
where \eqref{eq:rep_expand} is simply expanding the RKHS norm and \eqref{eq:MMD_double_E} is by using \eqref{eq:int_swap}. In the derivation above $X$ has law $P$, $Y$ has law $Q$ and $X',Y'$ are i.i.d.\ copies of $X,Y$, respectively. This double expectation formula is what makes MMD practical since it can be easily estimated empirically given samples from $P,Q$ \citep{Gretton2012}.

MMD has found many uses in machine learning and computational statistics, for example two-sample testing \citep{Gretton2012}, parameter inference \citep{Cherief2020}, training generative models \citep{Li2017} and distribution compression \citep{Dwivedi2021}. A long list of other applications and references may be found in \citet[Section 3.5]{Muandet2017}. 

The use for MMD that is the focus of this section is the following pre-image problem \citep[Chapter 18]{Schlkopf2002}.

\begin{problem}[The pre-image problem]\label{prob:MMD}
	Given a kernel $k$, $\{x_{n}\}_{n=1}^{N}\subset\calX$ and $M\in\bbN$ find non-negative weights $\{w_{m}\}_{m=1}^{M}\subset\bbR_{\geq 0}$ and  $\{z_{m}\}_{m=1}^{M}\subset\calX$ which minimise
	\begin{align*}
		\emph{MMD}_{k}(P_{w,z},P_{x})^{2} & = \left\lVert\sum_{m=1}^{M}w_{m}k(z_{m},\cdot) - \sum_{n=1}^{N}k(x_{n},\cdot)\right\rVert_{k}^{2} \\
		& = \sum_{m,m'=1}^{M}w_{m}w_{m'}k(z_{m},z_{m'}) - 2\sum_{m=1}^{M}\sum_{n=1}^{N}w_{m}k(z_{m},x_{n}) + \sum_{n,n'=1}^{N}k(x_{n},x_{n'}),
	\end{align*}
	where $P_{w,z} = \sum_{m=1}^{M}w_{m}\delta_{z_{m}}, P_{x} = \sum_{n=1}^{N}\delta_{x_{n}}$ are the empirical measures corresponding to the two sets of data.
\end{problem}

The pre-image problem was studied heavily in the 90s and early 00s in the context of support vector machines \citep{Burges1996,Scholkopf1999RS} and kernel principal component analysis \citep{Kwok2004,Mika1998}. It is called a pre-image problem since it is seeking to find weights and points which map close to $\sum_{n=1}^{N}k(x_{n},\cdot)$ in the RKHS. 

This problem relates to many topics within kernel methods and computational statistics. Reduced set methods solve a similar pre-image problem to find sparse representations of kernel-based algorithms \citep{Burges1996}. Kernel principal component analysis also solves a similar pre-image problem \citep{Schlkopf1997}. Kernel herding \citep{Chen2010,Bach2012} is a method of solving the pre-image problem by greedily picking points and then choosing weights. The main two methods either use uniform weights or line-search. More will be discussed about kernel herding in Section \ref{sec:Hilbert_Coresets}. The scenario where $\{z_{m}\}_{m=1}^{M}\subset\{x_{n}\}_{n=1}^{N}$ is known as distribution compression \citep{Dwivedi2021}, or quantisation \citep{Teymur2021,Graf2000}, where the weights are typically left as uniform over the data points. The scenario where a user is not trying to approximate an empirical distribution but rather a continuous distribution using kernel-based approaches is covered within the kernel Stein discrepancy literature \citep{Riabiz2022,Anastasiou2023} with quasi-Monte Carlo being a related field which aims to find discretisations of given measures for integral approximation \citep{Caflisch1998,Dick2013}. In statistical depth, the $h$-depth \citep{Wynne2021} is an instance of the pre-image problem when $M=1$ so only one point is used to represent the data. 

The pre-image problem immediately appears to be related to Bayesian coresets since both involve starting with a data set and finding weights and points. The subtlety is that the effectiveness of a Bayesian coreset is measured by the quality of its corresponding posterior approximation. This means the notion of quality of weights and points lying in $\calX$ is expressed in terms of a distance between measures on $\Theta$. This is in contrast to most investigations of the pre-image problem and the related problems outlined above. The difference is due to the common use of kernels which involve expressions that measure distance between their inputs purely in terms of the geometry of $\calX$. The trick to link the pre-image problem \eqref{prob:MMD} to Bayesian coresets is to use a kernel which maps the data into a Bayes Hilbert space. This will facilitate comparison of points in $\calX$ in terms of corresponding posteriors.

This is done by using a kernel of the form $k(x,y) = \langle \phi_{\mu}(x),\phi_{\mu}(y)\rangle_{H}$, such kernels were discussed at the start of this section. The same setting and notation for Bayes' theorem used in Section \ref{sec:Bayes_theorem} is maintained. The likelihood function based at a point $x\in\calX$ is $l_{x}$. For some base measure $\mu$ the feature map is $\phi_{\mu}(x)  = \log l_{x} - \bbE_{\mu}[\log l_{x}]$ with feature space $H = L^{2}(\mu)$ and the kernel is defined
\begin{align}
	k(x,y) = \langle \phi_{\mu}(x),\phi_{\mu}(y)\rangle_{L^{2}(\mu)}.\label{eq:BHS_kernel}
\end{align}

This kernel measures the similarity between two points in $\calX$ by comparing the similarity of the centred versions of the log-likelihoods based at those points. This provides the crucial link between the data space and the sample space. 

\begin{theorem}\label{thm:MMD_BHS}
	Let $\calX$ be a metric space equipped with its Borel $\sigma$-algebra. Let $\mu$ be a finite measure on the measurable space $\Theta$ and $l$ a likelihood function such that $\phi_{\mu}\colon\calX\rightarrow L^{2}(\mu)$, $\phi_{\mu}(x) = \log l_{x}-\bbE_{\mu}[\log l_{x}]$ is well-defined and measurable. Let $\pi_{0}\in B^{2}(\mu)$ and $\pi\in B^{2}(\mu)$ be the posterior with prior $\pi_{0}$, likelihood $l$ and observations $\{x_{n}\}_{n=1}^{N}\subset\calX$. Let $M\in\bbN$,  $\{w_{m}\}_{m=1}^{M}\subset\bbR$ and $\{z_{m}\}_{m=1}^{M}\subset\calX$. If $k$ is the kernel \eqref{eq:BHS_kernel} then 
	\begin{align}
		\emph{MMD}_{k}&(P_{w,z},P_{x}) =  \left\lVert\pi_{w,z}-\pi\right\rVert_{B^{2}(\mu)},\label{eq:MMD_thm}
	\end{align}
	where $\pi_{w,z}$ is the posterior based on the weighted likelihood \eqref{eq:coreset_aprx}. 
\end{theorem}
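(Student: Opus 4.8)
The plan is to show that both sides of \eqref{eq:MMD_thm} equal the $L^{2}(\mu)$ norm of the single element $\sum_{m=1}^{M}w_{m}\phi_{\mu}(z_{m})-\sum_{n=1}^{N}\phi_{\mu}(x_{n})$, and therefore equal each other. The left-hand side is handled by unpacking the feature-map kernel \eqref{eq:BHS_kernel} inside the double-expectation representation of MMD, while the right-hand side is handled by subtracting the two centred log-likelihood (CLR) expressions and observing that the prior contributions cancel.

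For the left-hand side I would start from \eqref{eq:MMD_double_E}. Since $P_{w,z}=\sum_{m}w_{m}\delta_{z_{m}}$ and $P_{x}=\sum_{n}\delta_{x_{n}}$ are finite sums of point masses, each expectation there is a finite sum with no integrability concern beyond $\phi_{\mu}(z_{m}),\phi_{\mu}(x_{n})\in L^{2}(\mu)$, which holds by hypothesis. Substituting $k(x,y)=\langle\phi_{\mu}(x),\phi_{\mu}(y)\rangle_{L^{2}(\mu)}$ and pulling the finite sums through the bilinear inner product, the three terms assemble into a single squared norm,
\[
	\text{MMD}_{k}(P_{w,z},P_{x})^{2}=\left\lVert\sum_{m=1}^{M}w_{m}\phi_{\mu}(z_{m})-\sum_{n=1}^{N}\phi_{\mu}(x_{n})\right\rVert_{L^{2}(\mu)}^{2}.
\]

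For the right-hand side I would use the CLR of the coreset posterior \eqref{eq:CLR_coreset} and of the full posterior \eqref{eq:CLR_post}. Forming $\Psi_{\mu}(\pi_{w,z})-\Psi_{\mu}(\pi)$, the common prior term $\log p_{\pi_{0},\mu}-\bbE_{\mu}[\log p_{\pi_{0},\mu}]$ cancels, leaving $(\calL_{w,z}-\bbE_{\mu}[\calL_{w,z}])-(\calL-\bbE_{\mu}[\calL])$. Writing $\calL_{w,z}=\sum_{m}w_{m}\log l_{z_{m}}$ and $\calL=\sum_{n}\log l_{x_{n}}$ and grouping each summand with its own centring constant gives $\sum_{m}w_{m}(\log l_{z_{m}}-\bbE_{\mu}[\log l_{z_{m}}])-\sum_{n}(\log l_{x_{n}}-\bbE_{\mu}[\log l_{x_{n}}])$, which by the definition of $\phi_{\mu}$ is exactly $\sum_{m}w_{m}\phi_{\mu}(z_{m})-\sum_{n}\phi_{\mu}(x_{n})$. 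By the definition of the Bayes Hilbert norm \eqref{eq:BHS_norm}, $\norm{\pi_{w,z}-\pi}_{B^{2}(\mu)}=\norm{\Psi_{\mu}(\pi_{w,z})-\Psi_{\mu}(\pi)}_{L^{2}(\mu)}$ then equals the same quantity, establishing \eqref{eq:MMD_thm}.

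The computation is essentially mechanical, so I do not anticipate a genuine obstacle; the one step deserving care is the cancellation of the prior terms, which is the conceptual heart of the result, as it is what allows a feature map built solely from the likelihoods to reproduce the full posterior discrepancy, and it relies on $\pi_{w,z}$ and $\pi$ sharing the same prior $\pi_{0}$. I would also record that $\pi_{w,z}\in B^{2}(\mu)$ so the norm is well-defined: each $\phi_{\mu}(z_{m})\in L^{2}(\mu)$ by hypothesis and $\pi_{0}\in B^{2}(\mu)$, so the right-hand side of \eqref{eq:CLR_coreset} lies in $L^{2}_{0}(\mu)$ and Theorem \ref{thm:BHS_L2} yields $\pi_{w,z}\in B^{2}(\mu)$.
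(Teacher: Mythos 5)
Your proposal is correct and follows essentially the same route as the paper's proof: both rewrite $\text{MMD}_{k}(P_{w,z},P_{x})$ via the kernel mean embedding representation and the feature-map form of the kernel \eqref{eq:BHS_kernel} as the $L^{2}(\mu)$ norm of $\sum_{m=1}^{M}w_{m}\phi_{\mu}(z_{m})-\sum_{n=1}^{N}\phi_{\mu}(x_{n})$, and both identify this with $\Psi_{\mu}(\pi_{w,z})-\Psi_{\mu}(\pi)$ through the cancellation of the prior terms in \eqref{eq:CLR_coreset} and \eqref{eq:CLR_post}, concluding by the definition of the norm \eqref{eq:BHS_norm}. Your closing verification that $\pi_{w,z}\in B^{2}(\mu)$, via Theorem \ref{thm:BHS_L2}, is a small supplement the paper leaves implicit and is worth recording.
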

\begin{proof}
The assumptions ensure that $k$ is well-defined and measurable and that empirical measures on $\calX$ are measurable.
	\begin{align}
		\text{MMD}_{k}&(P_{w,z},P_{x})^{2}  = \left\lVert\sum_{m=1}^{M}w_{m}k(z_{m},\cdot) - \sum_{n=1}^{N}k(x_{n},\cdot)\right\rVert_{k}^{2}\label{eq:deriv_3}\\
		& = \sum_{m,m'=1}^{M}w_{m}w_{m'}k(z_{m},z_{m'}) - 2\sum_{m=1}^{M}\sum_{n=1}^{N}w_{m}k(z_{m},x_{n}) + \sum_{n,n'=1}^{N}k(x_{n},x_{n'})\label{eq:deriv_4}\\
		& = \left\lVert\sum_{m=1}^{M}\phi_{\mu}(z_{m}) - \sum_{n=1}^{N}\phi_{\mu}(x_{n})\right\rVert_{L^{2}(\mu)}^{2}\label{eq:deriv_5}\\
		& = \left\lVert\Psi_{\mu}(\pi_{w,z}) - \Psi_{\mu}(\pi)\right\rVert_{L^{2}}^{2}(\mu)\label{eq:deriv_6}\\
		& = \left\lVert\pi_{w,z}-\pi\right\rVert_{B^{2}(\mu)}^{2},\label{eq:deriv_7}
	\end{align}
	where \eqref{eq:deriv_3} is by \eqref{eq:MMD_Phi}, \eqref{eq:deriv_4} is by the reproducing property, \eqref{eq:deriv_5} is by the definition of $k$, \eqref{eq:deriv_6} is by the expression for the CLR transform of the approximation posterior \eqref{eq:CLR_coreset} and the full posterior \eqref{eq:CLR_post} where the term in the brackets in the expressions cancel out and finally \eqref{eq:deriv_7} is by the definition of the Bayes Hilbert space norm \eqref{eq:BHS_norm}.
\end{proof}

Theorem \ref{thm:MMD_BHS} shows that the pre-image problem, Problem \ref{prob:MMD}, is equivalent to minimising a Bayes Hilbert space distance between the Bayesian coreset posterior and the target posterior when using the kernel \eqref{eq:BHS_kernel}. An immediate consequence of this result is that bounds on MMD can be translated into bounds on the Bayes Hilbert space distance, and therefore by Theorem \ref{thm:bounds} bounds on commonly used distances between measures. An simple example is the following corollary which deals with the case of approximating a posterior by using a likelihood based on a subset of the observed data uniformly sampled without replacement. This random uniform subsampling without replacement is used as a common basic benchmark for evaluating approximate posterior methods \citep{Campbell2019}.

\begin{corollary}\label{cor:concentration}
	Under the same assumptions as Theorem \ref{thm:MMD_BHS}, let $M\leq N$ and set $\pi_{M}$ to be the approximate posterior satisfying $p_{\pi_{M},\pi_{0}} =_{B} (N/M)\odot\bigoplus_{m=1}^{M} l_{z_{m}}$ where $\{z_{m}\}_{m=1}^{M}$ is randomly uniformly subsampled without replacement from $\{x_{n}\}_{n=1}^{N}$. Assume that $\norm{\phi_{\mu}(x_{n})}_{L^{2}(\mu)}\leq \gamma\:\forall \:1\leq n\leq N$ then with probability at least $1-\delta$
	\begin{align*}
		\left\lVert\pi-\pi_{M}\right\rVert_{B^{2}(\mu)} \leq \sqrt{\frac{8\gamma^{2}N(N-M+1)}{M}}\sqrt{2\log\left(\frac{2}{\delta}\right)}.
	\end{align*}
\end{corollary}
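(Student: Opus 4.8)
The plan is to first use Theorem \ref{thm:MMD_BHS} to rewrite the Bayes Hilbert space norm as the $L^{2}(\mu)$ norm of a subsampling error, and then to control that error with a concentration inequality tailored to sampling without replacement. Observe that $\pi_{M}$ is exactly the coreset posterior \eqref{eq:coreset_aprx} built from the points $\{z_{m}\}_{m=1}^{M}$ with the uniform weights $w_{m}=N/M$, since $(N/M)\odot\bigoplus_{m=1}^{M}l_{z_{m}}$ is the Radon--Nikodym derivative associated with $L_{w,z}=\prod_{m=1}^{M}l_{z_{m}}^{N/M}$. Applying Theorem \ref{thm:MMD_BHS} with the kernel \eqref{eq:BHS_kernel} and the empirical measure $P_{w,z}=\sum_{m=1}^{M}(N/M)\delta_{z_{m}}$ therefore gives
\begin{align*}
\left\lVert\pi-\pi_{M}\right\rVert_{B^{2}(\mu)} = \text{MMD}_{k}(P_{w,z},P_{x}) = \left\lVert\frac{N}{M}\sum_{m=1}^{M}\phi_{\mu}(z_{m}) - \sum_{n=1}^{N}\phi_{\mu}(x_{n})\right\rVert_{L^{2}(\mu)},
\end{align*}
which reduces the statement to a purely $L^{2}(\mu)$-analytic claim about the subsampled feature vectors.

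Next I would write $v_{n}\coloneqq\phi_{\mu}(x_{n})\in L^{2}(\mu)$, so that by hypothesis $\norm{v_{n}}_{L^{2}(\mu)}\leq\gamma$, and denote by $\bar{v}=\frac{1}{N}\sum_{n=1}^{N}v_{n}$ the population mean and by $\hat{v}_{M}=\frac{1}{M}\sum_{m=1}^{M}\phi_{\mu}(z_{m})$ the mean of the uniformly subsampled points. The quantity above is then exactly $N\lVert\hat{v}_{M}-\bar{v}\rVert_{L^{2}(\mu)}$. The structural fact that drives the result is that uniform subsampling without replacement makes $\hat{v}_{M}$ an unbiased estimator of $\bar{v}$; the target is thus a high-probability bound on the deviation of a finite-population sample mean of bounded Hilbert-space vectors from its mean.

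The remaining and principal step is a Hilbert-space-valued concentration inequality for sampling without replacement. Concretely, I would establish the sub-Gaussian tail
\begin{align*}
\bbP\left(\lVert\hat{v}_{M}-\bar{v}\rVert_{L^{2}(\mu)}\geq t\right) \leq 2\exp\left(-\frac{NM\,t^{2}}{16\gamma^{2}(N-M+1)}\right),
\end{align*}
invert it at level $\delta$, and multiply through by $N$ to recover the stated bound (one checks $\sqrt{8}\cdot\sqrt{2}=4$, so the factor $8$ and the $2$ inside the logarithm are consistent with this tail). Two ingredients feed the tail: the bounded-difference sensitivity, namely that swapping one sampled index alters $\lVert\hat{v}_{M}-\bar{v}\rVert_{L^{2}(\mu)}$ by at most $2\gamma/M$ since it perturbs $\hat{v}_{M}$ by a rescaled difference of two vectors of norm at most $\gamma$; and the exact without-replacement second moment
\begin{align*}
\bbE\left[\lVert\hat{v}_{M}-\bar{v}\rVert_{L^{2}(\mu)}^{2}\right] = \frac{N-M}{M(N-1)}\cdot\frac{1}{N}\sum_{n=1}^{N}\lVert v_{n}-\bar{v}\rVert_{L^{2}(\mu)}^{2} \leq \frac{(N-M)\gamma^{2}}{M(N-1)},
\end{align*}
which supplies the finite-population correction and bounds $\bbE\lVert\hat{v}_{M}-\bar{v}\rVert_{L^{2}(\mu)}$ through Jensen's inequality.

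I expect this last step to be the main obstacle. A crude argument that treats the $M$ draws as independent, or that reduces to sampling with replacement, would lose the finite-population correction and produce an $N^{2}/M$ rather than the sharper $N(N-M+1)/M$ scaling; in particular it would fail to reflect that the error must vanish as $M\to N$. Obtaining the factor $(N-M+1)$ requires a genuine without-replacement (Serfling-type) analysis, either by invoking a Hoeffding--Serfling inequality extended to Hilbert-space-valued statistics or by running McDiarmid's bounded-differences inequality over the uniformly random permutation underlying the subsample and combining the resulting deviation with the second-moment bound above to assemble the product form $\sqrt{\cdot}\,\sqrt{2\log(2/\delta)}$.
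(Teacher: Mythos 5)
Your proposal matches the paper's proof in both structure and substance: the paper likewise identifies $\pi_{M}$ with the uniform-weight ($w_{m}=N/M$) coreset posterior, applies Theorem \ref{thm:MMD_BHS} to reduce the Bayes Hilbert space norm to a kernel mean embedding deviation rescaled by $N$, and then concludes by citing the Hilbert-space-valued without-replacement concentration inequality of \citet[Theorem 2]{Schneider2016} --- precisely the ``Hoeffding--Serfling inequality extended to Hilbert-space-valued statistics'' you propose to invoke, with your tail constant $16 = 8\cdot 2$ consistent with the stated bound after the substitution $\varepsilon = N\varepsilon_{0}$. One caution on your fallback route: a McDiarmid bounded-differences argument over the underlying random permutation, with per-swap sensitivity $2\gamma/M$, yields an exponent of order $M^{2}t^{2}/(N\gamma^{2})$ rather than $NMt^{2}/\bigl((N-M+1)\gamma^{2}\bigr)$, so it loses the finite-population correction you rightly identify as essential in the coreset regime $M\ll N$; only the Serfling-type martingale analysis (which is what the cited result carries out) delivers the stated rate.
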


The proof is a direct application of the kernel mean embeddings concentration inequality of \citet{Schneider2016}. The bound is similar to other concentration inequalities for Bayesian coresets, such as \citet[Theorem 4.1]{Campbell2019}. The difference is that Corollary \ref{cor:concentration} focuses on the simple random sub-sample case. More sophisticated sub-sampling methods, such as importance sampling as was done in \citet[Theorem 4.1]{Campbell2019} could also be easily applied in the context of a Bayes Hilbert space due to the Hilbertian structure.  

The connection between MMD and Bayes Hilbert spaces will be used again in the next section to outline a novel connection between Bayesian coreset algorithms and kernel-based methods for solving the pre-image problem.

\section{Hilbert Bayesian Coresets}\label{sec:Hilbert_Coresets}
The aim of this section is to provide a novel connection between Hilbert Bayesian coreset algorithms and Bayes Hilbert spaces. The former are Bayesian coreset algorithms which revolve around using a Hilbert norm and inner product to define a notion of distance between the target posterior and the approximation. The primary focus will be on showing that the Frank-Wolfe Bayesian coreset algorithm \citep{Campbell2019} is equivalent to Frank-Wolfe kernel-herding \citep{Chen2010,Bach2012} when using the kernel \eqref{eq:BHS_kernel}. This novel relationship will be shown to be a consequence of Theorem \ref{thm:MMD_BHS}. A secondary focus will be on the iterative hard thresholding coreset algorithm \citep{Zhang2021} which will be related to MMD descent methods.

\subsection{Frank-Wolfe Bayesian coresets}
The goal of both the Frank-Wolfe Bayesian coreset algorithm and Frank-Wolfe kernel-herding is essentially is to approximate an element of a Hilbert space using a candidate set. In both cases, as the name suggests, the Frank-Wolfe optimisation method is used to find the best approximation over the set. To begin with, the approximation problem of concern and the Frank-Wolfe algorithm solution is outlined only in the generality needed to make the desired connections. For further discussion of the method consult \citet{Clarkson2010}.

For some Hilbert space $H$, fix some approximation target $f\in H$ and let $\{g_{n}\}_{n=1}^{N}\subset H$ be a set of elements which will be used to approximate the target. Let $\{\sigma_{n}\}_{n=1}^{N}\subset\bbR_{>0}$ be a set of positive real numbers, set $\sigma = \sum_{n=1}^{N}\sigma_{n}$ and define the polytope $\mathcal{W} = \{w\in\bbR^{N}\colon w\geq 0, \sum_{n=1}^{N}w_{n}\sigma_{n}\leq \sigma\}$, noting it has vertices $v_{n} = (\sigma/\sigma_{n})\mathbf{1}_{n}$ where $\mathbf{1}_{n}$ is the one-hot vector of all zeros except a $1$ in the $n$-th entry. The goal is to find
\begin{align}
	\argmin_{w\in\mathcal{W}}\frac{1}{2}\norm{f-g_{w}}_{H}^{2},\label{eq:FW_opt}
\end{align} 
where $g_{w} = \sum_{n=1}^{N}w_{n}g_{n}$. The Frank-Wolfe method to solve this problem involves iterative, conditional gradient updates. It looks at the residual of the current approximation, finds the direction most aligned with it, then performs line search in that direction. Specifically, at the $t$-th iteration one performs the update of the current guess $u_{t}\in\mathcal{W}$ as follows
\begin{align}
	\overline{u}_{t} & = \argmax_{v_{n}}\langle f-g_{u_{t}},g_{v_{n}}\rangle_{H}\label{eq:inner_opt}\\
	u_{t+1} & = (1-\rho_{t})u_{t} + \rho_{t}\overline{u}_{t},\nonumber
\end{align}
where $\rho_{t}$ is a scalar value calculated in closed form using $\overline{u}_{t},u_{t},f$ \citep{Bach2012} and $v_{n} = (\sigma/\sigma_{n})\mathbf{1}_{n}$ are the vertices of $\mathcal{W}$, meaning $g_{v_{n}} = (\sigma/\sigma_{n})g_{n}$. Only the vertices are optimised over in \eqref{eq:inner_opt} as this is equivalent to optimising over the whole polytope $\mathcal{W}$ since the function to optimise in \eqref{eq:inner_opt} is linear. This means that only one member of the approximating dictionary is added at each iteration, meaning after $T$ iterations the current solution is $T$-sparse. 

The Frank-Wolfe Hilbert coreset algorithm derived in \citet{Campbell2019} fits this template. A version which includes log-likelihood centering is now outlined, which was not used in \citet{Campbell2018,Campbell2019} though was later advocated by \citet{Campbell2019VI,Zhang2021} since it enforces the desired scale invariance property for the likelihood approximation. Set $H=L^{2}(\mu)$,  $f = \Psi_{\mu}(\pi), g_{n} = \phi_{\mu}(x_{n})$ and $\sigma_{n} = \norm{\phi_{\mu}(x_{n})}_{L^{2}(\mu)}$. With these substitutions the goal becomes finding 
\begin{align}
	\argmin_{w\in\mathcal{W}}\frac{1}{2}\left\lVert\Psi_{\mu}(\pi)-\sum_{n=1}^{N}w_{n}\phi_{\mu}(x_{n})\right\rVert_{L^{2}(\mu)}^{2} = \argmin_{w\in\mathcal{W}}\frac{1}{2}\left\lVert\Psi_{\mu}(\pi)-\Psi_{\mu}(\pi_{w})\right\rVert_{L^{2}(\mu)}^{2},\label{eq:FW_min}
\end{align}
where $\pi_{w}$ denotes the Bayesian coreset approximation \eqref{eq:coreset_aprx} with weights $w$ and $z_{n} = x_{n}\:\forall\:n$. The Frank-Wolfe iterations are
\begin{align}
	\overline{u}_{t}  = \frac{\sigma}{\sigma_{x_{n}}}\mathbf{1}_{n} \text{ where } n & = \argmin_{n\in[N]}\left\langle \Psi_{\mu}(\pi)-g_{u_{t}},\frac{\sigma}{\sigma_{x_{n}}}\phi_{\mu}(x_{n})\right\rangle_{L^{2}(\mu)}\label{eq:FW_update}\\
	u_{t+1} & = (1-\rho_{t})u_{t} + \rho_{t}\overline{u}_{t},\nonumber
\end{align}
where the full expression for the update of $\overline{u}_{t}$, see \eqref{eq:inner_opt}, has been written for clarity and $[N] = \{1,\ldots,N\}$. Note \citet[Equation 4.2]{Campbell2019} matches \eqref{eq:FW_opt} and \citet[Equation 4.4]{Campbell2019} matches \eqref{eq:inner_opt} once the centred log-likelihoods are substituted for the log-likelihoods. As \eqref{eq:FW_min} and \eqref{eq:FW_update} are written in terms of CLR transforms, the $L^{2}(\mu)$ inner products can be written in terms of the $B^{2}(\mu)$ inner product, see \eqref{eq:inner_prod}. This shows that the Frank-Wolfe Hilbert coreset algorithm can be written in the language of Bayes Hilbert spaces.

The Frank-Wolfe coreset algorithm will now be related to Frank-Wolfe kernel-herding. Kernel-herding is where the Hilbert space $H$ is set to be an RKHS and the algorithm is used in the context of solving the kernel pre-image problem, see Problem \ref{prob:MMD}. Let $k$ be the kernel \eqref{eq:BHS_kernel}, $H=\calH_{k}$ with approximation target $f=\sum_{n=1}^{N}k(x_{n},\cdot)$ and retain the same choice of $g_{n},\sigma_{n}$ as above. Kernel-herding aims to find
\begin{align}
	\argmin_{w\in\mathcal{W}}\frac{1}{2}\left\lVert\sum_{n=1}^{N}k(x_{n},\cdot) - \sum_{n=1}^{N}w_{n}k(x_{n},\cdot)\right\rVert_{k}^{2},\label{eq:FW_kernel_goal}
\end{align}
which, using Theorem \ref{thm:MMD_BHS}, is equal to \eqref{eq:FW_min}. The kernel-herding Frank-Wolfe updates are
\begin{align}
	\overline{u}_{t}  = \frac{\sigma}{\sigma_{x_{n}}}\mathbf{1}_{n} \text{ where } n & = \argmin_{n\in[N]}\left\langle \sum_{n=1}^{N}k(x_{n},\cdot)-g_{u_{t}},\frac{\sigma}{\sigma_{x_{n}}}k(x_{n},\cdot)\right\rangle_{k},\label{eq:FW_kernel_update}\\
	& = \argmin_{n\in[N]}\left\langle \Psi_{\mu}(\pi)-g_{u_{t}},\frac{\sigma}{\sigma_{x_{n}}}\phi_{\mu}(x_{n})\right\rangle_{L^{2}(\mu)},\label{eq:FW_kernel_con}\\
	u_{t+1} & = (1-\rho_{t})u_{t} + \rho_{t}\overline{u}_{t},\nonumber
\end{align}
where \eqref{eq:FW_kernel_con} is again by Theorem \ref{thm:MMD_BHS}. 

Due to the equivalences between the optimisation goals \eqref{eq:FW_kernel_goal} and \eqref{eq:FW_min} and updates \eqref{eq:FW_kernel_update} and \eqref{eq:FW_update} it can be concluded that the Frank-Wolfe Hilbert coreset algorithm is equivalent to Frank-Wolfe kernel-herding when one uses the kernel \eqref{eq:BHS_kernel} whose features map uses the CLR to map data to likelihoods in a Bayes Hilbert space. This identification immediately proposes some questions. For example, when kernel-herding is typically employed it does not used the subset of data requirement enforced in the Frank-Wolfe Bayesian coreset method, instead it chooses points that are anywhere on the domain which are called ``super-samples'' in \citet{Chen2010}. This leads to the question of how kernel-herding performs when used to construct Bayesian coresets without the limitation of the coreset points being a subset of the observed data points. Such a methodology was called \textit{pseudo-coresets} in \citet{Manousakas2020} when investigating coresets constructed using KL divergence rather than a Hilbert norm.
 
\subsection{Iterative hard thresholding Bayesian coresets}
The iterative hard thresholding (IHT) coreset algorithm \citep{Zhang2021} proposes to find the weights of a Bayesian coreset using gradient descent and then thresholds the weights to ensure they non-negative and sparse. More specifically, the problem under consideration is
\begin{align}
\argmin_{w\in \mathcal{W}_{M}}\frac{1}{2}\left\lVert \Psi_{\mu}(\pi) - \sum_{n=1}^{N}w_{n}\phi_{\mu}(x_{n})\right\rVert_{L^{2}(\mu)}^{2},\label{eq:IHT_prob}
\end{align}
where $\mathcal{W}_{M} = \{w\in\bbR^{N}\colon w\geq 0, \norm{w}_{0}\leq M\}$ is the set of vectors in $\bbR^{N}$ with non-negative weights and at most $M$ non-zero values. The parameter $M$ dictates the level of sparseness of the approximation. The IHT technique \citep{Zhang2021} solves this problem by performing gradient descent with respect to $w$ at each iteration and then projecting the weights to $\mathcal{W}_{M}$, to maintain a valid, sparse choice of weights. 

Using Theorem \ref{thm:MMD_BHS} is it made apparent that \eqref{eq:IHT_prob} is equivalent to finding a sparse, non-negative weights solution to the kernel pre-image problem, Problem \ref{prob:MMD}. Gradient descent methods were some of the first methods considered for the kernel pre-image problem \citep{Burges1996} although there were not studied extensively due to the computational considerations at the time. The IHT method utilises advances in accelerated optimisation methods to efficiently find choices of weights that result in a good approximation to the posterior. 

The link to the pre-image problem and the IHT method can be taken further by realising that the IHT method is optimising an MMD criterion. Therefore, IHT can be seen as performing MMD descent on the empirical measure that corresponds to the choice of points and weights. MMD descent is a method of empirical measure approximation that uses gradients to minimise an MMD objective \citep{Arbel2019}. There are related methods which use other kernel-based discrepancies \citep{Korba2021KSD,Xu2022}. This begs the question of the performance of these other kernel-based descent methods on the task of constructing Bayesian coresets when using the kernel \eqref{eq:BHS_kernel}.

\section{Kullback-Leibler Bayesian Coresets}\label{sec:KL_coresets}
The aim of this section is to provide a novel connection between Kullback-Leibler (KL) Bayesian coreset algorithms and Bayes Hilbert spaces. The former are Bayesian coreset algorithms which use the Kullback-Leibler divergence to measure the discrepancy between the target posterior and the Bayesian coreset posterior. The primary focus will be on showing that the quasi-newton KL Bayesian coreset algorithm \citep{Naik2022} is equivalent, up to hyper-parameter choices, to variational inference in a Bayes Hilbert space. This novel relationship will be shown to be a consequence of the description of variational inference techniques in terms of Bayes Hilbert spaces provided by \citet{Barfoot2023}. 

The KL divergence, see Section \ref{sec:BHS} is a divergence that is used often in variational inference \citep{Blei2017}. This is a measure approximation method where the KL divergence is minimised over a user chosen set of measures, called the variational family. The variational family is often parameterised using the parameters of a given distribution. For example, the variational family could be a mixture of Gaussian and the variational parameters would be the mixture weights and the means and covariances of each Gaussian. The optimisation to find the best approximation within the variational family typically involves gradient descent updates. This does not require samples from the target measure, an essential property in the case where the target measure is expensive to sample from, for example when it is the posterior after observing a large number of data points.  

Variational inference was framed in terms of Bayes Hilbert spaces by \citet{Barfoot2023} and this perspective is now outlined. Let $B^{2}(\mu)$ be a Bayes Hilbert space, $\nu\in B^{2}(\mu)$ be the target measure one wishes to approximate and $\{b_{n}\}_{n=1}^{N}\subset B^{2}(\mu)$ some set of $N$ elements which form the dictionary of the approximation. The variational family is $\{\oplus_{n=1}^{N}w_{n}\odot b_{n}\colon w\in\bbR^{N}\}$ with corresponding optimisation problem
\begin{align}
	\argmin_{w\in\bbR^{N}}\text{KL}\left(\bigoplus_{n=1}^{N}w_{n}\odot b_{n}\;\|\; \nu\right),\label{eq:Barfoot_KL}
\end{align}
where the notation $\oplus,\odot$ is defined in Section \ref{sec:BHS}. 

The key insight of \citet{Barfoot2023} is that solving this by a quasi-Newton gradient descent method with respect to $w$ is equivalent to doing iterative projections in a Bayes Hilbert space. Specifically, \citet[Equation 41]{Barfoot2023} provide the quasi-Newton update
\begin{align}
	w_{t+1} = w_{t} + G_{\nu_{t}}(\textbf{b})^{-1}\langle \textbf{b},\nu\ominus\nu_{t}\rangle_{B^{2}(\nu_{t})},\label{eq:Barfoot_update}
\end{align}
where $\nu_{t}$ is the approximation at iteration $t$ using weights $w_{t}$, $G_{\nu_{t}}(\textbf{b})$ is the $N\times N$ Gram matrix of the approximating dictionary $\{b_{n}\}_{n=1}^{N}$ with $(n,m)$-th entry $\langle b_{n},b_{m}\rangle_{B^{2}(\nu_{t})}$ and $\langle \textbf{b},\nu\ominus\nu_{t}\rangle_{B^{2}(\nu_{t})}$ is the $\bbR^{N}$ vector with $n$-th entry $\langle b_{n},\nu \ominus \nu_{t}\rangle_{B^{2}(\nu_{t})}$. See Section \ref{sec:BHS} for the definition of $\ominus$ and the Bayes Hilbert space inner product. It was noted that this quasi-Newton gradient update can be written as 
\begin{align*}
	w_{t+1} = G_{\nu_{t}}(\textbf{b})^{-1}\langle \textbf{b},\nu\rangle_{B^{2}(\nu_{t})},
\end{align*}
see \citet[Equation 43]{Barfoot2023}, meaning that at each iteration the target $\nu$ is being projected to the span of $\{b_{n}\}_{n=1}^{N}$ with respect to the $B^{2}(\nu_{t})$ inner product. This shows how the geometry of Bayes Hilbert spaces links with variational inference methods.

The connection between the quasi-Newton Bayesian coreset algorithm derived by \citet{Naik2022} and this Bayes Hilbert space perspective on variational inference is now made. The method of \citet{Naik2022} starts by setting the target measure to be $\pi$, the posterior, and choosing some $M\in\bbN$. Then a size $M$ subset from $\{x_{n}\}_{n=1}^{N}$ is uniformly sampled, which has its indices re-ordered and is denoted $\{x_{m}\}_{m=1}^{M}$. The weights for the log-likelihood terms $\{l_{x_{m}}\}_{m=1}^{M}$ are then optimised using a quasi-Newton iterations and thresholding the weights at each iteration to be non-negative. 

The connection to \citet{Barfoot2023} is made by setting $\nu = \pi$, $N=M$ and $b_{m} = l_{x_{m}}$ in \eqref{eq:Barfoot_KL}. This means the optimisation is
\begin{align*}
	\argmin_{w\in\bbR^{M}_{\geq 0}}\text{KL}\left(\bigoplus_{m=1}^{M}w_{m}\odot b_{m}\;\|\;\pi\right) = \argmin_{w\in\bbR^{M}_{\geq 0}}\KL{\pi_{w,x}}{\pi},
\end{align*}
which is equal to \eqref{eq:Barfoot_KL} except with an additional non-negative weight constraint. 

Reading off the quasi-Newton iteration \citep[Equation 13]{Naik2022} one sees it is equal, up to hyper-parameter choices, to the update \eqref{eq:Barfoot_update} since the covariance terms in \citet[Equation 13]{Naik2022} coincide with the $B^{2}(\nu_{t})$ inner product. The method of \citet{Naik2022} then thresholds the weights to ensure non-negativity. 

Overall, this shows that the quasi-Newton KL coreset method can be viewed in terms of iterative projections in a Bayes Hilbert space. This cross pollination of perspectives is productive since it provides a Bayes Hilbert space geometric interpretation to the quasi-Newton Bayesian coreset algorithm. In turn, the numerical and theoretical investigations of \citet{Naik2022} bolster the geometric perspective. Given that Bayes Hilbert spaces are tangent spaces in the information geometry sense \citep{Fukumizu2009,Pistone2013} the quasi-Newton coreset method \citep{Naik2022} can be seen as iterative updates of projecting in a direction in a tangent space and then updating the base location of the tangent space. This is also related to natural gradient descent as noted by \citet{Barfoot2023}. This perspective was introduced for the greedy KL coreset method in \citet{Campbell2019VI} and the results of this section show it applies in a broader sense via the Bayes Hilbert space perspective.

\section{Conclusion}\label{sec:Conclusion}
This manuscript has described Bayes Hilbert spaces and shown how they are appropriate spaces to perform posterior measure approximation. Along the way a novel bound relating the Bayes Hilbert space norm to common discrepancies was provided as well as a novel connection between kernel-based discrepancies and Bayes Hilbert spaces in the context of posterior approximation. Multiple Bayesian coreset methods were expressed in terms of Bayes Hilbert spaces which provided insight and new interpretations of how they work. 

Future avenues of research relating to Bayes Hilbert spaces, Bayesian coresets and posterior approximation are legion. Three main avenues are now outlined. First, an issue noted when defining Bayes Hilbert spaces was that they are in a sense too big, because they can contain infinite measures. The related work in information geometry \citep{Fukumizu2009} defines a related set of measures that is homeomorphic to a subset of a Hilbert space that has stronger integrability conditions, but the set of measures is not a vector space whereas the Bayes Hilbert space is. Therefore, a question is what is an appropriate space with stricter integrability conditions to maintain finite measures while also having a vector space structure. Second, as noted in \citet{Boogaart2014} it is straight forward to construct a basis for the Bayes Hilbert space by using orthogonal polynomials. The specific case of a Gaussian base measure and Hermite polynomials was studied in \citet{Barfoot2023}. There have been many recent advances in the study of sparse, high-dimensional approximation using sparse polynomials \citep{Adcock2022}. This begs the question of how these methods can be applied to the Bayes Hilbert space. Using polynomials for likelihood approximation in the context of Bayesian computation was studied in \citet{Huggins2017} and combining this approach with the aforementioned advances could provide further advantages. Finally, there have been many recent innovations in distribution compression and quantisation of empirical measures \citep{Riabiz2022,Dwivedi2021}. Given that Theorem \ref{thm:MMD_BHS} links this problem to the Bayesian coreset problem the question immediately arises as to how to leverage thee innovations in the context of posterior approximation, in particular in the case where the points to form the likelihood $\{z_{m}\}_{m=1}^{M}$ are not a subset of the observed data. This pseudo-coreset method has only been explored for Bayesian coresets in the KL coreset setting \citep{Manousakas2020} but it is a natural thing to do in distribution compression setting \citep{Chen2010}.

\section{Appendix}

\subsection{Proof of Theorem \ref{thm:bounds}}
The proof of this result is simply an adaptation of Theorem 5, Theorem 11 and Theorem 14 from \citet{Sprungk2020}. All the proofs leverage a local Lipschitz continuity which follows from the assumption involving the bound $B$ on the CLR transforms. 

Recall that the measures in question $\eta,\nu$ are written as 
	\begin{align}
	\frac{\diff\eta}{\diff\mu} & = Z_{\eta}^{-1}\exp(\Psi_{\mu}(\eta))\qquad Z_{\eta} = \bbE_{\mu}[\exp(\Psi_{\mu}(\eta))]\\
	\frac{\diff\nu}{\diff\mu} & = Z_{\nu}^{-1}\exp(\Psi_{\mu}(\nu))\qquad Z_{\nu} = \bbE_{\mu}[\exp(\Psi_{\mu}(\nu))].
\end{align}
By Jensen's inequality, this means that $Z_{\eta}\geq \exp(\bbE_{\mu}[\Psi_{\mu}(\eta)]) = 1$ since $\Psi_{\mu}(\eta)\in L^{2}_{0}(\mu)$, the same lower bound applies to $Z_{\nu}$ too. Therefore, $Z_{\eta}^{-1},Z_{\nu}^{-1}\leq 1$.

\subsubsection*{Hellinger Distance}

First the Hellinger distance bound is derived. Starting from the definition \eqref{eq:Hellinger} and using $\mu$ as the base measure
\begin{align*}
	2\text{H}(\eta,\nu)^{2} & = \int_{\Theta}\left(e^{\frac{1}{2}\Psi_{\mu}(\eta)}Z_{\eta}^{-\frac{1}{2}} - e^{\frac{1}{2}\Psi_{\mu}(\nu)}Z_{\nu}^{-\frac{1}{2}}\right)^{2}\diff\mu\\
	& \leq 2\int_{\Theta}\left(e^{\frac{1}{2}\Psi_{\mu}(\eta)}Z_{\eta}^{-\frac{1}{2}} -e^{\frac{1}{2}\Psi_{\mu}(\nu)}Z_{\eta}^{-\frac{1}{2}}  \right)^{2}\diff\mu\\
	& + 2\int_{\Theta}\left(e^{\frac{1}{2}\Psi_{\mu}(\nu)}Z_{\eta}^{-\frac{1}{2}} -e^{\frac{1}{2}\Psi_{\mu}(\nu)}Z_{\nu}^{-\frac{1}{2}}  \right)^{2}\diff\mu\\
	& \eqqcolon I_{1} + I_{2},
\end{align*}
where the inequality $(a-b)^{2}\leq 2(a-c)^{2} + 2(c-b)^{2}$ has been used which is a result of the triangle inequality. The terms $I_{1},I_{2}$ will now be upper bounded.
\begin{align}
	I_{1} & = 2Z_{\eta}^{-1}\int_{\Theta}\left(e^{\frac{1}{2}\Psi_{\mu}(\eta)}-e^{\frac{1}{2}\Psi_{\mu}(\nu)}\right)^{2}\diff\mu \nonumber\\
	& \leq 2e^{B}\int_{\Theta}\left(\frac{\Psi_{\mu}(\eta)}{2} - \frac{\Psi_{\mu}(\nu)}{2}\right)^{2}\diff\mu \label{eq:loc_lip_H}\\
	& = \frac{1}{2}e^{B}\norm{\Psi_{\mu}(\eta)-\Psi_{\mu}(\nu)}_{L^{2}(\mu)}^{2} = \frac{1}{2}e^{B}\norm{\eta-\nu}_{B^{2}(\mu)}^{2},\nonumber
\end{align}
where \eqref{eq:loc_lip_H} uses $\lvert e^{x}-e^{y}\rvert\leq e^{\max(x,y)}\lvert x-y\rvert\:\forall\:x,y,\in\bbR$ and the assumed upper bound $B$ on $\Psi_{\mu}(\eta),\Psi_{\mu}(\nu)$. To bound $I_{2}$ first note that by the definition of the normalisation constants, 
\begin{align*}
	I_{2} & = 2\int_{\Theta}\left(e^{\frac{1}{2}\Psi_{\mu}(\nu)}Z_{\eta}^{-\frac{1}{2}} -e^{\frac{1}{2}\Psi_{\mu}(\nu)}Z_{\nu}^{-\frac{1}{2}}  \right)^{2}\diff\mu\\
	& = 2Z_{\eta}^{-1}\left(Z_{\eta}^{\frac{1}{2}} - Z_{\nu}^{\frac{1}{2}}\right)^{2} \leq 2\left(Z_{\eta}^{\frac{1}{2}} - Z_{\nu}^{\frac{1}{2}}\right)^{2}.
\end{align*}
Now, $\lvert x^{1/2}-y^{1/2}\rvert\leq \frac{1}{2}\min(x,y)^{-1/2}\lvert x-y\rvert\:\forall\:x,y > 0$ gives
\begin{align*}
	I_{2}\leq \frac{1}{\min(Z_{\eta},Z_{\nu})}\lvert Z_{\eta}-Z_{\nu}\rvert^{2}\leq\frac{1}{2}\lvert Z_{\eta}-Z_{\nu}\rvert^{2}.
\end{align*}
Finally,
\begin{align*}
	\lvert Z_{\eta}-Z_{\nu}\rvert^{2}\leq \int_{\Theta}\left\lvert e^{\Psi_{\mu}(\eta)}-e^{\Psi_{\mu}(\nu)}\right\rvert^{2}\diff\mu\leq e^{2B}\norm{\Psi_{\mu}(\eta)-\Psi_{\mu}(\nu)}_{L^{2}(\mu)}^{2} = e^{2B}\norm{\eta-\nu}_{B^{2}(\mu)}^{2},
\end{align*}
where the same bounding technique as \eqref{eq:loc_lip_H} was used. Putting this together gives
\begin{align*}
	\text{H}(\eta,\nu)^{2}\leq \frac{1}{4}\left(e^{B}+e^{2B}\right)\norm{\eta-\nu}_{B^{2}(\mu)}^{2},
\end{align*}
as desired. 

\subsubsection*{KL Divergence}
The case for KL divergence is now covered. First note 
\begin{align*}
	\frac{\diff \eta}{\diff \nu} = \frac{\diff \eta}{\diff\mu}\frac{\diff\mu}{\diff\nu} = Z_{\eta}^{-1}Z_{\nu}e^{\Psi_{\mu}(\eta)-\Psi_{\mu}(\nu)},
\end{align*}
therefore
\begin{align*}
	\KL{\eta}{\nu}=\int_{\Theta}\log\frac{\diff\eta}{\diff\nu}\diff\eta & \leq \left\lvert\log Z_{\eta} - \log Z_{\nu}\right\rvert + \int_{\Theta}\left\lvert\Psi_{\mu}(\eta)-\Psi_{\mu}(\nu)\right\rvert Z_{\eta}^{-1}e^{\Psi_{\mu}(\eta)}\diff\mu\\
	& \eqqcolon I_{1} + I_{2}.
\end{align*}
Using $\lvert \log x-\log y\rvert\leq \min(x,y)^{-1}\lvert x-y\rvert\:\forall x,y > 0$ gives
\begin{align*}
	I_{1}& \leq \min(Z_{\eta},Z_{\nu})^{-1}\lvert Z_{\eta}-Z_{\nu}\rvert\\
	& \leq e^{B}\norm{\Psi_{\mu}(\eta)-\Psi_{\mu}(\nu)}_{L^{1}(\mu)}\\
	& \leq e^{B}\norm{\Psi_{\mu}(\eta)-\Psi_{\mu}(\nu)}_{L^{2}(\mu)} = e^{B}\norm{\eta-\nu}_{B^{2}(\mu)},
\end{align*}
where the local Lipschitz bound $\lvert e^{x}-e^{y}\rvert\leq e^{\max(x,y)}\lvert x-y\rvert\:\forall\:x,y,\in\bbR$ was used to get the $L^{1}(\mu)$ norm upper bound, similar to its use in \eqref{eq:loc_lip_H}. The $L^{2}(\mu)$ norm upper bounds the $L^{1}(\mu)$ norm since $\mu$ is a probability measure. If $\mu$ was assumed only to be a finite measure then there would be an extra factor of $\mu(\Theta)$ in the bounds. Bounding $I_{2}$ is done in a straightforward way
\begin{align*}
	I_{2} = \int_{\Theta}\left\lvert\Psi_{\mu}(\eta)-\Psi_{\mu}(\nu)\right\rvert Z_{\eta}^{-1}e^{\Psi_{\mu}(\eta)}\diff\mu & \leq e^{B}\norm{\Psi_{\mu}(\eta)-\Psi_{\mu}(\nu)}_{L^{1}(\mu)}\\& \leq e^{B}\norm{\Psi_{\mu}(\eta)-\Psi_{\mu}(\nu)}_{L^{2}(\mu)} = e^{B}\norm{\eta-\nu}_{B^{2}(\mu)}.
\end{align*}
Putting $I_{1},I_{2}$ together gives the desired bound for the KL divergence. 

\subsubsection*{Wasserstein-$1$ Distance}
Finally, the Wasserstein-$1$ case is dealt with. Take any $\theta_{0}\in\Theta$ then by simply shifting the values it suffices to consider the supremum over functions in $\text{Lip}(1)$ such that $f(\theta_{0}) = 0$. For such functions $\lvert f(\theta)\rvert\leq d_{\Theta}(\theta,\theta_{0})$. Following \citet[Theorem 14]{Sprungk2020},
\begin{align*}
	\left\lvert\int_{\Theta}f\diff\eta-\int_{\Theta}f\diff\nu\right\rvert & = \left\lvert\int_{\Theta}f\cdot\left(Z_{\eta}^{-1}e^{\Psi_{\mu}(\eta)}-Z_{\nu}^{-1}e^{\Psi_{\mu}(\nu)}\right)\diff\mu\right\rvert\\
	& \leq \left\lvert Z_{\eta}^{-1}-Z_{\nu}^{-1}\right\rvert\left\lvert\int_{\Theta}fe^{\Psi_{\mu}(\eta)}\diff\mu\right\rvert + \left\vert Z_{\nu}^{-1}\int_{\Theta}f\cdot\left(e^{\Psi_{\mu}(\eta)}-e^{\Psi_{\mu}(\nu)}\right)\diff\mu\right\rvert\\
	& \eqqcolon I_{1} + I_{2},
\end{align*}
where the inequality is by the triangle inequality. To bound $I_{1}$ note
\begin{align*}
	\left\lvert Z_{\eta}^{-1}-Z_{\nu}^{-1}\right\rvert = \frac{\left\lvert Z_{\eta}-Z_{\nu}\right\rvert}{Z_{\eta}Z_{\nu}}\leq \left\lvert Z_{\eta}-Z_{\nu}\right\rvert\leq e^{B}\norm{\Psi_{\mu}(\eta)-\Psi_{\mu}(\nu)}_{L^{1}(\mu)},
\end{align*}
where the first inequality is by the lower bound of $1$ on the normalising constants and the second by using again the local Lipschitz result, as was done in \eqref{eq:loc_lip_H}. Using this gives
\begin{align*}
	I_{1}\leq \int_{\Theta}\left\lvert f e^{\Psi_{\mu}(\eta)}\right\rvert\diff\mu\cdot e^{B}\norm{\Psi_{\mu}(\eta)-\Psi_{\mu}(\nu)}_{L^{1}(\mu)} \leq e^{2B}\norm{\mu}_{\calP^{1}}\norm{\Psi_{\mu}(\eta)-\Psi_{\mu}(\nu)}_{L^{1}(\mu)},
\end{align*}
where the second inequality is by the bounds on $\lvert f(\theta)\rvert$, $\Psi_{\mu}(\eta)$ and the way the choice of $\theta_{0}$ was arbitrary, which allows the infimum bound $norm{\mu}_{\calP^{1}}$.

Bounding $I_{2}$ is largely similar,
\begin{align*}
	I_{2}\leq \int_{\Theta}\lvert d_{\Theta}(\theta,\theta_{0})\rvert \left\lvert e^{\Psi_{\mu}(\eta)}-e^{\Psi_{\mu}(\nu)}\right\rvert\diff\mu\leq  e^{B}\norm{\mu}_{\calP^{2}}\int_{\Theta} \norm{\Psi_{\mu}(\eta)-\Psi_{\mu}(\nu)}_{L^{2}(\mu)},
\end{align*}
where the second inequality is by the Cauchy-Schwarz inequality and the local Lipschitz bound again. Overall, using the Cauchy-Schwartz inequality along with how $\mu$ is a probability measure to bound the $L^{1}$ norm with the $L^{2}$ norm gives
\begin{align*}
	\text{W}_{1}(\eta,\nu)\leq \left(e^{B}+e^{2B}\right)\norm{\mu}_{\calP^{2}}\norm{\Psi_{\mu}(\eta)-\Psi_{\mu}(\nu)}_{L^{2}(\mu)} = \left(e^{B}+e^{2B}\right)\norm{\mu}_{\calP^{2}}\norm{\eta-\nu}_{B^{2}(\mu)}.
\end{align*}

\subsection{Proof of Corollary \ref{cor:concentration}}
The proof is a simple application of \citet[Theorem 2]{Schneider2016}. First, note that by Theorem \ref{thm:MMD_BHS} 
\begin{align*}
	\left\lVert\pi_{M}-\pi\right\rVert_{B^{2}(\mu)} = \text{MMD}_{k}(P_{M},P_{N}) = \left\lVert \Phi_{k}P_{M} - \Phi_{k}P_{N}\right\rVert_{k} ,
\end{align*}
where $k$ is the kernel \eqref{eq:BHS_kernel}, $P_{M} = \frac{N}{M}\sum_{m=1}^{M}z_{m}$ and $P_{N} = \sum_{n=1}^{N}\delta_{x_{n}}$. Therefore, the quantity to be bounded is the same as in \citet[Theorem 2]{Schneider2016} up to an additional scaling of $N$ since the referenced result involves the normalised empirical measures $\widetilde{P}_{M} = \frac{1}{N}P_{M}, \widetilde{P}_{N} = \frac{1}{N}P_{N}$. This means the bound in \citet[Theorem 2]{Schneider2016} implies for any $\varepsilon_{0}$
\begin{align*}
	\bbP\left(\left\lVert \Phi_{k}\widetilde{P}_{M} - \Phi_{k}\widetilde{P}_{N}\right\rVert_{k}\geq \varepsilon_{0}\right) = \bbP(\left\lVert \Phi_{k}P_{M} - \Phi_{k}P_{N}\right\rVert_{k}\geq N\varepsilon_{0})\leq 2\exp\left(-\frac{M\varepsilon_{0}^{2}}{8\gamma^{2}(1-(M-1)/N)}\right),
\end{align*}
and then the final step is simply substituting $\varepsilon = N\varepsilon_{0}$ and  rearranging to obtain the bound purely in terms of $\delta$. 

{
\bibliographystyle{abbrvnat}
\bibliography{BHS_refs.bib}

}



\end{document}